
\documentclass[journal]{IEEEtran}

\usepackage{mathrsfs}
\usepackage{amsmath}
\usepackage{amsfonts}
\usepackage{amssymb}
\usepackage{graphicx}
\usepackage{subfigure}
\usepackage{cite}

\newtheorem{thm}{Theorem}

\newtheorem{cor}{Corollary}
\newtheorem{rem}{Remark}
\newtheorem{lem}{Lemma}

\newtheorem{defi}{Definition}
\def\Ds{\displaystyle}
\def\diag{\mathrm{diag}}
\def\tp{\mathrm{T}}

\def\sgn{\mathrm{sgn}}

\begin{document}

\title{Convergence Behavior Analysis of Directed Signed Networks Subject to Nonidentical Topologies}

\author{Deyuan Meng, {\it Senior Member}, {\it IEEE}, Mingjun Du, and Jianqiang Liang 
\thanks{This work was supported by the National Natural Science Foundation of China under Grant 61922007 and Grant 61873013. {\it(Corresponding author: Deyuan Meng.)}}
\thanks{The authors are with the Seventh Research Division, Beihang University (BUAA), Beijing 100191, P. R. China, and also with the School of Automation Science and Electrical Engineering, Beihang University (BUAA), Beijing 100191, P. R. China (e-mail: dymeng@buaa.edu.cn).}
}


\maketitle
\date{}

\begin{abstract}
This paper addresses the behavior analysis problems for directed signed networks that involve cooperative-antagonistic interactions among agents. Of particular interest is to explore the convergence behaviors of directed signed networks with agents of mixed first-order and second-order dynamics. Further, the agents are subject to nonidentical topologies represented by two different signed digraphs that have a strongly connected union. It is shown that when considering signed networks subject to sign-consistent nonidentical topologies, polarization (respectively, neutralization) can be achieved if and only if the union of two signed digraphs is structurally balanced (respectively, unbalanced). By comparison, signed networks can always be guaranteed to become neutralized in the presence of sign-inconsistent nonidentical topologies.
\end{abstract}
\begin{keywords}
Signed network, nonidentical topology, polarization, sign-consistency, structural balance.
\end{keywords}

\section{Introduction}\label{sec1}

Networks with multiple agents (vertices or nodes) that may interact cooperatively or antagonistically with each other have attracted considerable research interests recently, especially in the two areas of multi-agent systems and of complex systems. They also have potential values in many applications, such as social sciences, natural sciences, economics, and robotics (e.g., see \cite[Section 6]{pt:18} for more discussions). This class of networks subject to cooperative-antagonistic interactions is called {\it signed networks} for convenience since, to describe the communication topologies of them, signed graphs are generally employed such that the positive and negative weights of their arcs can be used to represent cooperative and antagonistic interactions between agents, respectively. Particularly, signed networks can include as a trivial case traditional networks, which are called {\it unsigned networks} for distinction, with the communication topologies of agents represented by the traditional unsigned (or nonnegative) graphs (see, e.g., \cite{rb:05,hhg:06,ycc:10,at:13,qzg:12,qy:13,lb:15,qzgmf:17,nmd:16,mndz:18}).

In general, unsigned networks enable the agents to achieve agreement (or consensus) since they only have the cooperative interactions among agents (see \cite{rb:05,hhg:06,ycc:10,at:13,qzg:12,qy:13,lb:15,qzgmf:17,nmd:16,mndz:18}). In comparison with unsigned networks, signed networks behave differently owing to the simultaneous existence of cooperations and antagonisms among agents. Various classes of dynamic behaviors emerge in signed networks, such as polarization or bipartite consensus \cite{a:13,vm:14,h:14,pmc:16,zc:17,zjy:17,m:18,wwyy:18}, sign consensus \cite{jzc:17}, modulus consensus \cite{msjch:16,lcbb:17,xhc:19}, interval bipartite consensus \cite{hz:14,mdj:16,xcj:16,dm:18}, bipartite flocking \cite{fzw:14} and bipartite containment tracking \cite{m:17}. It is worth noticing that polarization, instead of agreement, plays a fundamentally important role in analyzing the behaviors of signed networks. Moreover, neutralization and polarization are counterparts for signed networks, which correspond to the structural unbalance and balance of them, respectively. In particular, agreement can be viewed as a trivial case of polarization, in accordance with that the unsigned networks are included as a special case of the structurally balanced signed networks. Regardless of the great development on the behavior analysis of signed networks, most existing results make contributions to signed networks with the identical topologies that can be described with a single graph.

Different from identical topologies, nonidentical topologies generally need two different graphs to identify the communications among agents. In unsigned networks, it has already been realized that the agreement results of agents with nonidentical topologies are not simply an extension of those in the presence of an identical topology, but subject to challenging difficulties in both protocol design and agreement analysis (see, e.g., \cite{qzg:12,qy:13,lb:15,qzgmf:17,nmd:16,mndz:18}). New design and analysis approaches are usually required to be explored for unsigned networks owing to the presence of nonidentical topologies. Take, for instance, \cite{qzg:12,qy:13,lb:15,qzgmf:17} that resort to new Lyapunov analysis approaches because they generally need to treat two different classes of convergence problems on second-order unsigned networks with nonidentical topologies \cite{qzg:12,qy:13,lb:15,qzgmf:17}. In \cite{nmd:16,mndz:18}, a new dynamic graph approach based on the quadratic matrix polynomials has been introduced to cope with the ill effects of nonidentical topologies on the agreement analysis of unsigned networks. How to implement the behavior analysis of directed signed networks with mixed first-order and second-order dynamics, and what are the challenging problems of them caused by nonidentical topologies? In addition, what are the main differences between the convergence analyses of signed and unsigned networks against nonidentical topologies, and further are they fundamental? Even though some attempts have been made to answer these questions in, e.g., \cite{m:18,lm:18}, the frequency-domain analysis approaches have been utilized, with which either the effects of nonidentical topologies have not been considered \cite{m:18}, or the strict symmetry of information communications among agents has been imposed \cite{lm:18}. Also, a new class of sign-consistency problems due to nonidentical topologies has been discovered for signed networks in \cite{lm:18}, which however is only discussed for undirected networks, and it is also left to develop the behaviors of signed networks when the sign-consistency condition is violated.

In this paper, we aim at directed signed networks subject to nonidentical topologies and analyze the convergence behaviors of them though the agents have mixed first-order and second-order dynamics. It is shown that a sign-consistency problem is caused for sign patterns of nonidentical topologies and found to play a dominant role in investigating convergence behaviors of signed networks. The sign-consistency is a distinct problem resulting from the nonidentical topologies of signed networks, which naturally disappears for the cases of identical topologies or of unsigned networks. When two signed digraphs (short for directed graphs) representing the nonidentical communication topologies of signed networks are sign-consistent, polarization emerges for the agents if and only if the union of two signed digraphs is structurally balanced; and otherwise, neutralization arises. In the presence of sign-inconsistent nonidentical topologies, these results do not work any longer, and correspondingly the Lyapunov analysis is not sufficient to address convergence problems for signed networks. To overcome challenges arising from the sign-inconsistency, an $M$-matrix method is proposed, which reveals that signed networks always become neutralized.

{\it Notations:} Throughout this paper, let $1_{n}=[1,1,\cdots,1]^{\tp}\in\mathbb{R}^{n}$, and $I$ and $0$ be the identity and null matrices with compatible dimensions, respectively. Denote $\mathcal{I}_{n}=\{1,2,\cdots,n\}$, and two sets of matrices as
\[\aligned
\mathcal{D}_{n}&=\left\{D=\diag\left\{d_{1},d_{2},\cdots,d_{n}\right\}:d_{i}\in\{\pm1\},\forall i\in\mathcal{I}_{n}\right\}\\ \mathcal{Z}_{n}&=\left\{Z=\left[z_{ij}\right]\in\mathbb{R}^{n\times n}:z_{ij}\leq0,\forall i\neq j,\forall i,j\in\mathcal{I}_{n}\right\}
\endaligned
\]

\noindent where $\diag\{d_{1},d_{2},\cdots,d_{n}\}$ is a diagonal matrix with diagonal entries given in order as $d_{1}$, $d_{2}$, $\cdots$, $d_{n}$. For any $A\in\mathbb{R}^{n\times m}$, we call it a nonnegative matrix, denoted by $A\geq0$, if all entries of $A$ are nonnegative. When $m=n$ (i.e., $A$ is square), $\det(A)$ is the determinant of $A$, and $\lambda_{i}(A)$, $\forall i\in\mathcal{I}_{n}$ is an eigenvalue of $A$. If all eigenvalues of $A$ are real, then $\lambda_{\max}(A)$ and $\lambda_{\min}(A)$ are the largest and smallest eigenvalues of $A$, respectively. Besides, for $A=[a_{ij}]\in\mathbb{R}^{n\times m}$, we denote
\[\aligned
\Delta_{A}&=\diag\left\{\sum_{j=1}^{m}a_{1j},\sum_{j=1}^{m}a_{2j},\cdots,\sum_{j=1}^{m}a_{nj}\right\},
&\left|A\right|&=\left[\left|a_{ij}\right|\right]\\
A^{+}&=\left[a_{ij}^{+}\right],
&A^{-}&=\left[a_{ij}^{-}\right]
\endaligned
\]

\noindent where $\left|a_{ij}\right|=\sgn\left(a_{ij}\right)a_{ij}$, together with $\sgn\left(a_{ij}\right)$ taking the sign value of $a_{ij}$; $a_{ij}^{+}=a_{ij}$ if $a_{ij}>0$ and $a_{ij}^{+}=0$, otherwise; and $a_{ij}^{-}=a_{ij}$ if $a_{ij}<0$ and $a_{ij}^{-}=0$, otherwise. Any square matrix $A\in\mathbb{R}^{n\times n}$ is said to be Hurwitz stable (respectively, positive stable) if, for every $i\in\mathcal{I}_{n}$, $\lambda_{i}(A)$ has a negative (respectively, positive) real part. By \cite[Definition 2.5.2]{hj:91}, we say that $A\in\mathcal{Z}_{n}$ is an $M$-matrix if $A$ is positive stable.

\section{Signed Digraphs and Associated Networks}\label{sec2}

\subsection{Signed Digraphs}

A digraph is a pair $\mathscr{G}=\left(\mathcal{V},\mathcal{E}\right)$ that consists of a vertex set $\mathcal{V}=\left\{v_{i}:\forall i\in\mathcal{I}_{n}\right\}$ and an arc set $\mathcal{E}\subseteq\left\{\left(v_{j},v_{i}\right):\forall j\neq i\right\}$. If there exists an arc $\left(v_{j},v_{i}\right)\in\mathcal{E}$, $\forall j\neq i$, then $v_{i}$ has a ``neighbor $v_{j}$,'' with which the index set $\mathcal{N}_{i}=\left\{j:(v_{j},v_{i})\in\mathcal{E}\right\}$ denotes all the neighbors of $v_{i}$. If there exist sequential arcs $(v_{i},v_{l_{1}})$, $(v_{l_{1}},v_{l_{2}})$, $\cdots$, $(v_{l_{m-1}},v_{j})$ formed by distinct vertices $v_{i}$, $v_{l_{1}}$, $\cdots$, $v_{l_{m-1}}$, $v_{j}$, then $\mathscr{G}$ admits a path from $v_{i}$ to $v_{j}$. We say that $\mathscr{G}$ is strongly connected if, for each pair of distinct vertices, there exists a path between them. For the digraph $\mathscr{G}$, let two other digraphs $\mathscr{G}^{c}=(\mathcal{V},\mathcal{E}^{c})$ and  $\mathscr{G}^{d}=(\mathcal{V},\mathcal{E}^{d})$ possess the same vertex set $\mathcal{V}$ as $\mathscr{G}$. If $\mathcal{E}=\mathcal{E}^{c}\cup\mathcal{E}^{d}$ holds, then $\mathscr{G}$ is called the union of $\mathscr{G}^{c}$ and $\mathscr{G}^{d}$, which is denoted by $\mathscr{G}=\mathscr{G}^{c}\cup\mathscr{G}^{d}$.

A digraph $\mathscr{G}$ is called a signed digraph if it is associated with a real adjacency matrix $B=\left[b_{ij}\right]\in\mathbb{R}^{n\times n}$, where $b_{ij}\neq0\Leftrightarrow\left(v_{j},v_{i}\right)\in\mathcal{E}$ (and in particular, $b_{ii}=0$, $\forall i\in\mathcal{I}_{n}$ due to $\left(v_{i},v_{i}\right)\not\in\mathcal{E}$). Let the signed digraph $\mathscr{G}$ associated with $B$ be denoted as $\mathscr{G}\left(B\right)=\left(\mathcal{V},\mathcal{E},B\right)$. The Laplacian matrix of $\mathscr{G}\left(B\right)$ is defined as $L_{B}=\left[l^{B}_{ij}\right]\in\mathbb{R}^{n\times n}$ whose elements satisfy $l^{B}_{ij}=\sum_{k\in\mathcal{N}_{i}}\left|b_{ik}\right|$ if $j=i$ and $l^{B}_{ij}=-b_{ij}$, otherwise. By following \cite{a:13}, $\mathscr{G}\left(B\right)$ is said to be structurally balanced if $\mathcal{V}$ admits a bipartition $\{\mathcal{V}^{(1)},\mathcal{V}^{(2)}:\mathcal{V}^{(1)}\cup\mathcal{V}^{(2)}=\mathcal{V},\mathcal{V}^{(1)}\cap\mathcal{V}^{(2)}=\O\}$ such that $b_{ij}\geq0$, $\forall v_{i}, v_{j}\in\mathcal{V}^{(l)}$ for $l\in\{1, 2\}$ and $b_{ij}\leq0$, $\forall v_{i}\in\mathcal{V}^{(l)}$, $\forall v_{j}\in\mathcal{V}^{(q)}$ for $l\neq q$, $\forall l, q\in\{1, 2\}$; and it is said to be structurally unbalanced, otherwise. Correspondingly, $\mathscr{G}\left(B\right)$ is structurally balanced (respectively, unbalanced) if and only if there exists some (respectively, does not exist any) $D\in\mathcal{D}_{n}$ such that $DBD=\left|B\right|$ (see also \cite{a:13}). In particular, when $B\geq0$, $\mathscr{G}\left(B\right)$ collapses into a traditional unsigned digraph that is a trivial case of structurally balanced signed digraphs.

For how to reasonably represent the union $\mathscr{G}\left(B^{c}\right)\cup\mathscr{G}\left(B^{d}\right)$ of two signed digraphs $\mathscr{G}\left(B^{c}\right)=\left(\mathcal{V},\mathcal{E}^{c},B^{c}\right)$ and $\mathscr{G}\left(B^{d}\right)=\left(\mathcal{V},\mathcal{E}^{d},B^{d}\right)$, we generally only have its vertex set $\mathcal{V}$ and arc set $\mathcal{E}^{c}\cup\mathcal{E}^{d}$, but how to associate it with an appropriate adjacency matrix is an open problem. To handle this problem, we present a sign-consistency property for any two signed digraphs in the following notion.

\begin{defi}\label{def1}
For any two matrices $B^{c}=\left[b_{ij}^{c}\right]\in\mathbb{R}^{n\times n}$ and $B^{d}=\left[b_{ij}^{d}\right]\in\mathbb{R}^{n\times n}$, if $b_{ij}^{c}b_{ij}^{d}\geq0$, $\forall i$, $j\in\mathcal{I}_{n}$, then $\mathscr{G}\left(B^{c}\right)$ and $\mathscr{G}\left(B^{d}\right)$ are called {\it sign-consistent signed digraphs}; and otherwise, they are called {\it sign-inconsistent signed digraphs}.
\end{defi}

As a benefit of Definition \ref{def1}, we provide a definition for the adjacency matrix of the union of any two signed digraphs.

\begin{defi}\label{def2}
For any two sign-consistent signed digraphs $\mathscr{G}\left(B^{c}\right)$ and $\mathscr{G}\left(B^{d}\right)$, their union $\mathscr{G}\left(B^{c}\right)\cup\mathscr{G}\left(B^{d}\right)$ can be defined by $\mathscr{G}\left(B\right)$ if the adjacency matrix $B$ is given by
\begin{equation*}
B=\alpha B^{c}+\beta B^{d}
\end{equation*}

\noindent where $\alpha\in\mathbb{R}$ and $\beta\in\mathbb{R}$ are any scalars such that $\alpha\beta>0$.
\end{defi}

\begin{rem}\label{rem7}
If $\mathscr{G}\left(B^{c}\right)$ and $\mathscr{G}\left(B^{d}\right)$ are sign-consistent, then $b_{ij}^{c}b_{ij}^{d}\geq0$ holds for all $i$, $j\in\mathcal{I}_{n}$ by Definition \ref{def1}. Hence, we apply $\alpha\beta>0$ and can verify
\[
\alpha b_{ij}^{c}+\beta b_{ij}^{d}=0
~\Leftrightarrow~b_{ij}^{c}=0~\hbox{and}~b_{ij}^{d}=0,~~~\forall i,j\in\mathcal{I}_{n}.
\]

\noindent Conversely, $\alpha b_{ij}^{c}+\beta b_{ij}^{d}\neq0$ if and only if $b_{ij}^{c}\neq0$ or $b_{ij}^{d}\neq0$. This yields $\mathscr{G}\left(B\right)=\mathscr{G}\left(\alpha B^{c}+\beta B^{d}\right)=\mathscr{G}\left(B^{c}\right)\cup\mathscr{G}\left(B^{d}\right)$, as adopted in Definition \ref{def2}. By contrast, when $\mathscr{G}\left(B^{c}\right)$ and $\mathscr{G}\left(B^{d}\right)$ are sign-inconsistent, $b_{ij}^{c}b_{ij}^{d}\geq0$ may not hold for any $i$, $j\in\mathcal{I}_{n}$, and then
\[\aligned
\alpha b_{ij}^{c}+\beta b_{ij}^{d}\neq0
&~\Rightarrow~b_{ij}^{c}\neq0~\hbox{or}~b_{ij}^{d}\neq0,~~~\forall i,j\in\mathcal{I}_{n}
\endaligned
\]

\noindent but the opposite may not be true. Namely, $\mathscr{G}\left(\alpha B^{c}+\beta B^{d}\right)
\subseteq\mathscr{G}\left(B^{c}\right)\cup\mathscr{G}\left(B^{d}\right)$ can only be derived. However, if we select suitable $\alpha$ and $\beta$ such that
\begin{equation}\label{eq02}
b_{ij}^{c}\neq0~\hbox{or}~b_{ij}^{d}\neq0
~\Rightarrow~\alpha b_{ij}^{c}+\beta b_{ij}^{d}\neq0,~~~\forall i,j\in\mathcal{I}_{n}
\end{equation}

\noindent then we also have $\mathscr{G}\left(B^{c}\right)\cup\mathscr{G}\left(B^{d}\right)
\subseteq\mathscr{G}\left(\alpha B^{c}+\beta B^{d}\right)$. As a consequence, only suitable selections of $\alpha$ and $\beta$ satisfying (\ref{eq02}) can ensure that $\mathscr{G}\left(\alpha B^{c}+\beta B^{d}\right)$ is qualified as the union of any sign-inconsistent $\mathscr{G}\left(B^{c}\right)$ and $\mathscr{G}\left(B^{d}\right)$.
\end{rem}

As revealed by Definition \ref{def2} and Remark \ref{rem7}, the sign patterns have great effect on properties of nonidentical signed digraphs. It is distinct for nonidentical signed digraphs, which disappears for identical signed digraphs or unsigned digraph pairs. The sign-consistency property of Definition \ref{def1} can also be extended to arbitrarily finite number of signed digraphs.

\subsection{Signed Networks}

Consider two signed digraphs $\mathscr{G}\left(B^{c}\right)$ and $\mathscr{G}\left(B^{d}\right)$ for $B^{c}=\left[b_{ij}^{c}\right]\in\mathbb{R}^{n\times n}$ and $B^{d}=\left[b_{ij}^{d}\right]\in\mathbb{R}^{n\times n}$, respectively. For the sake of distinguishing the neighbor index sets of agent $v_{i}$ in $\mathscr{G}\left(B^{c}\right)$ and $\mathscr{G}\left(B^{d}\right)$, we represent them as $\mathcal{N}_{i}^{c}$ and $\mathcal{N}_{i}^{d}$, respectively. Let $x_{i}(t)\in\mathbb{R}$ and $y_{i}(t)\in\mathbb{R}$ denote the states of $v_{i}$, and then for each $i\in\mathcal{I}_{n}$, the dynamics of $v_{i}$ fulfill
\begin{equation}\label{eq01}
\left\{\aligned
\dot{x}_{i}(t)&=\sum_{j\in\mathcal{N}_{i}^{c}}b_{ij}^{c}\left[x_{j}(t)-\sgn\left(b_{ij}^{c}\right)x_{i}(t)\right]+y_{i}(t)\\
\dot{y}_{i}(t)&=-k\left\{y_{i}(t)
+\sum_{j\in\mathcal{N}_{i}^{d}}b_{ij}^{d}\left[x_{j}(t)-\sgn\left(b_{ij}^{d}\right)x_{i}(t)\right]\right\}
\endaligned\right.
\end{equation}

\noindent where $k>0$ is a damping rate. It is worth highlighting that (\ref{eq01}) is subject to nonidentical topologies represented by two signed digraphs $\mathscr{G}\left(B^{c}\right)$ and $\mathscr{G}\left(B^{d}\right)$. The nonidentical topologies lead to that (\ref{eq01}) can represent signed networks of agents with mixed first-order and second-order dynamics. Two extreme cases of this mixed-order representation are given below.
\begin{enumerate}
\item[i)]
Let $\mathcal{I}_{n}^{1}=\left\{i:b_{ij}^{d}=0,\forall j\in\mathcal{I}_{n}\right\}$, and then for each $i\in\mathcal{I}_{n}^{1}$, (\ref{eq01}) collapses into describing that $v_{i}$ has the single-integrator dynamics.

\item[ii)]
Let $\mathcal{I}_{n}^{2}=\left\{i:b_{ij}^{c}=0,\forall j\in\mathcal{I}_{n}\right\}$, and then for each $i\in\mathcal{I}_{n}^{2}$, (\ref{eq01}) becomes the description of $v_{i}$ with the double-integrator dynamics.
\end{enumerate}

\noindent When $\mathcal{I}_{n}^{1}\cup\mathcal{I}_{n}^{2}=\mathcal{I}_{n}$, (\ref{eq01}) represents a signed network subject to mixed first-order and second-order dynamics, which only has been studied for the trivial case without involving antagonistic interactions among agents (see, e.g., \cite{fxlz:15,whwcp:16,zzw:11}).

\begin{rem}\label{rem1}
In particular, when $\mathcal{I}_{n}^{1}=\mathcal{I}_{n}$ holds, (\ref{eq01}) essentially describes a signed network with single-integrator dynamics associated with $\mathscr{G}\left(B^{c}\right)$ (see, e.g., \cite{a:13,hz:14,m:17,dm:18,mdj:16}). If $\mathcal{I}_{n}^{2}=\mathcal{I}_{n}$ holds, then (\ref{eq01}) collapses into a double-integrator signed network associated with $\mathscr{G}\left(B^{d}\right)$ (see, e.g., \cite{fzw:14,m:18}). Hence, we may build a relationship between single-integrator and double-integrator signed networks through the study on (\ref{eq01}), where the effects of nonidentical topologies on behaviors of directed signed networks may emerge as a crucial issue.
\end{rem}

For arbitrary initial states $x_{i}(0)$ and $y_{i}(0)$, $\forall i\in\mathcal{I}_{n}$, we say that the system (\ref{eq01}) associated with the nonidentical topologies $\mathscr{G}\left(B^{c}\right)$ and $\mathscr{G}\left(B^{d}\right)$ achieves
\begin{enumerate}
\item polarization (or bipartite consensus) if
\begin{equation*}
\lim_{t\to\infty}\left|x_{i}(t)\right|=\theta~\hbox{and}~\lim_{t\to\infty}y_{i}(t)=0,~\forall i\in\mathcal{I}_{n}
\end{equation*}

\item neutralization (or asymptotic stability) if
\begin{equation*}
\lim_{t\to\infty}x_{i}(t)=0~\hbox{and}~\lim_{t\to\infty}y_{i}(t)=0,~\forall i\in\mathcal{I}_{n}
\end{equation*}
\end{enumerate}

\noindent where $\theta\geq0$ denotes some constant depending on $x_i(0)$ and $y_i(0)$, $\forall i\in \mathcal{I}_n$.

\section{Behavior Analysis Results}\label{sec3}

To characterize the dynamic behaviors of the directed signed network represented by (\ref{eq01}), let us denote $x(t)=[x_{1}(t)$, $x_{2}(t)$, $\cdots$, $x_{n}(t)]^{\tp}$ and $y(t)=[y_{1}(t)$, $y_{2}(t)$, $\cdots$, $y_{n}(t)]^{\tp}$. Then we can express (\ref{eq01}) in a compact vector form of
\begin{equation}\label{eq03}
\begin{bmatrix}
\dot{x}(t)\\
\dot{y}(t)
\end{bmatrix}
=\begin{bmatrix}
-L_{B^{c}}&I\\
-kL_{B^{d}}&-kI
\end{bmatrix}
\begin{bmatrix}
x(t)\\
y(t)
\end{bmatrix}.
\end{equation}

\noindent For the trivial case when $B^{c}=0$, the convergence properties of (\ref{eq03}) are exploited in \cite[Theorem 4]{m:18}, where the selection of $k$ depends heavily on the topology of signed networks described by $\mathscr{G}\left(B^{d}\right)$. It is, however, obvious that the behavior analysis and result of \cite[Theorem 4]{m:18} do not work for signed networks described by (\ref{eq03}) subject to nonidentical $\mathscr{G}\left(B^{c}\right)$ and $\mathscr{G}\left(B^{d}\right)$. Further, how to select the rate $k$ to overcome the effects of nonidentical topologies on the behavior analysis of signed networks may become much more challenging.

To proceed to address the aforementioned issues, we denote $\mathscr{G}\left(B^{c}\right)\cup\mathscr{G}\left(B^{d}\right)=\mathscr{G}\left(B\right)$ and $X(t)=\left[x^{\tp}(t),y^{\tp}(t)\right]^{\tp}$, and present a nonsingular linear transformation on the system (\ref{eq03}).

\begin{lem}\label{lem1}
If a nonsingular linear transformation is used as
\[
Y(t)=\Theta X(t)~\hbox{with}~\Theta=\begin{bmatrix}kI&I\\I&0\end{bmatrix}
\]

\noindent then the system (\ref{eq03}) can be equivalently transformed into
\begin{equation}\label{eq04}
\dot{Y}(t)=
\underbrace{
\begin{bmatrix}
0&-k\left(L_{B^{c}}+L_{B^{d}}\right)\\
I&-\left(kI+L_{B^{c}}\right)
\end{bmatrix}}_{\Ds\triangleq\Gamma}
Y(t).
\end{equation}
\end{lem}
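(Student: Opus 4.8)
The plan is to verify the claimed equivalence by direct computation, exploiting the block structure of $\Theta$ and the simple observation that $\Theta$ is its own ``reversal'' up to the scaling block $kI$. First I would confirm that $\Theta$ is nonsingular: expanding along the block structure, $\det\Theta = \det\big(-I\big)\cdot\det(I) = (-1)^{n}$ (or one simply notes $\Theta\begin{bmatrix}u\\w\end{bmatrix}=0$ forces $w=0$ from the second block-row and then $u=0$ from the first), so $\Theta^{-1}$ exists and equals $\begin{bmatrix}0&I\\I&-kI\end{bmatrix}$, which is trivial to check by block multiplication.

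Next, writing $A$ for the coefficient matrix in (\ref{eq03}), namely $A=\begin{bmatrix}-L_{B^{c}}&I\\-kL_{B^{d}}&-kI\end{bmatrix}$, the transformation $Y=\Theta X$ gives $\dot Y=\Theta\dot X=\Theta A X=\Theta A\Theta^{-1}Y$, so it suffices to show $\Theta A\Theta^{-1}=\Gamma$. I would compute this in two steps. First form $\Theta A$ by block multiplication:
\begin{equation*}
\Theta A=\begin{bmatrix}kI&I\\I&0\end{bmatrix}\begin{bmatrix}-L_{B^{c}}&I\\-kL_{B^{d}}&-kI\end{bmatrix}=\begin{bmatrix}-kL_{B^{c}}-kL_{B^{d}}&kI-kI\\-L_{B^{c}}&I\end{bmatrix}=\begin{bmatrix}-k(L_{B^{c}}+L_{B^{d}})&0\\-L_{B^{c}}&I\end{bmatrix}.
\end{equation*}
Then multiply on the right by $\Theta^{-1}$:
\begin{equation*}
(\Theta A)\Theta^{-1}=\begin{bmatrix}-k(L_{B^{c}}+L_{B^{d}})&0\\-L_{B^{c}}&I\end{bmatrix}\begin{bmatrix}0&I\\I&-kI\end{bmatrix}=\begin{bmatrix}0&-k(L_{B^{c}}+L_{B^{d}})\\I&-L_{B^{c}}-kI\end{bmatrix},
\end{equation*}
which is exactly $\Gamma$. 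Hence (\ref{eq03}) and (\ref{eq04}) are related by the nonsingular change of variables $Y=\Theta X$, establishing the equivalence.

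There is essentially no hard part here — the lemma is a bookkeeping step whose only content is the fortuitous cancellation in the $(1,2)$ block of $\Theta A$ (the $kI-kI=0$) and in the $(1,1)$ block the grouping of $-kL_{B^{c}}-kL_{B^{d}}$ into $-k(L_{B^{c}}+L_{B^{d}})=-kL_{B^{c}+B^{d}}$, which ties back to the union digraph $\mathscr{G}(B)$ with $B=B^{c}+B^{d}$ (taking $\alpha=\beta=1$ in Definition \ref{def2}, valid since $\mathscr{G}(B^{c})$ and $\mathscr{G}(B^{d})$ are sign-consistent in the relevant case). If anything requires a word of care, it is only to note that the sign function appearing in (\ref{eq01}) has already been absorbed into the Laplacian definition $L_{B}$ — so that the diagonal of $L_{B^{c}}$ carries $\sum_{k\in\mathcal{N}_{i}^{c}}|b_{ik}^{c}|$ — and thus the passage from (\ref{eq01}) to (\ref{eq03}) needs no further justification before applying $\Theta$. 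I would therefore present the proof as the two block multiplications above together with the one-line verification of $\Theta^{-1}$.
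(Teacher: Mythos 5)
Your computation is correct and is exactly the direct block-multiplication verification that the paper's one-line proof (``direct from (\ref{eq03})'') leaves to the reader: $\Theta^{-1}=\begin{bmatrix}0&I\\I&-kI\end{bmatrix}$ and $\Theta A\Theta^{-1}=\Gamma$ both check out. (One trivial slip in your invertibility aside: the second block row of $\Theta\begin{bmatrix}u\\w\end{bmatrix}=0$ forces $u=0$ and then the first forces $w=0$, not the reverse; this affects nothing.)
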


\begin{proof}
With $Y(t)=\Theta X(t)$, (\ref{eq04}) is direct from (\ref{eq03}).
\end{proof}

For the system (\ref{eq04}), there are two classes of nontrivial block matrices in its system matrix, for which we have properties in the lemma below.

\begin{lem}\label{lem2}
For any positive scalars $k>0$, $\alpha>0$ and $\beta>0$ and any $n$-by-$n$ real matrices $B^{c}$ and $B^{d}$, the following hold:
\begin{enumerate}
\item[1)]
$kI+L_{B^{c}}$ is positive stable;

\item[2)]
$\alpha L_{B^{c}}+\beta L_{B^{d}}=L_{\alpha B^{c}+\beta B^{d}}$ if and only if two signed digraphs $\mathscr{G}\left(B^{c}\right)$ and $\mathscr{G}\left(B^{d}\right)$ are sign-consistent.
\end{enumerate}
\end{lem}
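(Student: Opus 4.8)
For part 1), the plan is to recall that the Laplacian $L_{B^{c}}$ of any signed digraph has a diagonal-dominance structure: by definition $l^{B^{c}}_{ii}=\sum_{k\in\mathcal{N}_{i}^{c}}|b^{c}_{ik}|=\sum_{j\neq i}|l^{B^{c}}_{ij}|$, so the Gershgorin disc centered at $l^{B^{c}}_{ii}$ has radius exactly $l^{B^{c}}_{ii}$ and is therefore contained in the closed right half-plane. Shifting by $kI$ moves every disc center to $k+l^{B^{c}}_{ii}\geq k>0$, and since the radii are unchanged, each disc now lies strictly in the open right half-plane. Hence every eigenvalue of $kI+L_{B^{c}}$ has positive real part, i.e. the matrix is positive stable. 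This is a short Gershgorin argument and I do not expect any obstacle here.

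For part 2), I would argue at the level of entries. Write $B=\alpha B^{c}+\beta B^{d}=[b_{ij}]$. The off-diagonal entries always match: $l^{B}_{ij}=-b_{ij}=-\alpha b^{c}_{ij}-\beta b^{d}_{ij}=\alpha l^{B^{c}}_{ij}+\beta l^{B^{d}}_{ij}$ for $i\neq j$. So $L_{\alpha B^{c}+\beta B^{d}}=\alpha L_{B^{c}}+\beta L_{B^{d}}$ holds if and only if the diagonal entries agree, i.e. $\sum_{j\neq i}|b_{ij}|=\alpha\sum_{j\neq i}|b^{c}_{ij}|+\beta\sum_{j\neq i}|b^{d}_{ij}|$ for every $i$. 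By the triangle inequality, $|\alpha b^{c}_{ij}+\beta b^{d}_{ij}|\leq\alpha|b^{c}_{ij}|+\beta|b^{d}_{ij}|$ for each $i,j$ (using $\alpha,\beta>0$), and equality holds for a given pair $(i,j)$ precisely when $\alpha b^{c}_{ij}$ and $\beta b^{d}_{ij}$ have the same sign or one of them is zero, which—again since $\alpha,\beta>0$—is exactly the condition $b^{c}_{ij}b^{d}_{ij}\geq0$.

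To finish the ``if'' direction: if $\mathscr{G}(B^{c})$ and $\mathscr{G}(B^{d})$ are sign-consistent, then $b^{c}_{ij}b^{d}_{ij}\geq0$ for all $i,j$, so equality holds in every term, the diagonal sums agree, and $L_{\alpha B^{c}+\beta B^{d}}=\alpha L_{B^{c}}+\beta L_{B^{d}}$. For the ``only if'' direction, I argue by contraposition: if the digraphs are sign-inconsistent, there is a pair $(i,j)$, necessarily with $i\neq j$, such that $b^{c}_{ij}b^{d}_{ij}<0$; then the triangle inequality is strict for that term, so $\sum_{j\neq i}|b_{ij}|<\alpha\sum_{j\neq i}|b^{c}_{ij}|+\beta\sum_{j\neq i}|b^{d}_{ij}|$ for that row $i$, hence the $(i,i)$ diagonal entries of the two matrices differ and $L_{\alpha B^{c}+\beta B^{d}}\neq\alpha L_{B^{c}}+\beta L_{B^{d}}$. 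The only mild subtlety to be careful about is that the equivalence ``equality in the triangle inequality $\iff$ same sign'' must be stated with the ``or one term is zero'' caveat so that it lines up cleanly with $b^{c}_{ij}b^{d}_{ij}\geq0$; once that bookkeeping is handled the proof is routine, and I expect the entrywise triangle-inequality step to be the only place requiring any care.
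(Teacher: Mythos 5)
Your proposal is correct and fills in exactly the routine verification that the paper's one-line proof (``immediate by leveraging definitions of Laplacian matrices and sign-consistency'') leaves implicit: the Gershgorin shift for part 1) and the entrywise triangle-inequality equality case $|\alpha b^{c}_{ij}+\beta b^{d}_{ij}|=\alpha|b^{c}_{ij}|+\beta|b^{d}_{ij}|\iff b^{c}_{ij}b^{d}_{ij}\geq0$ for part 2), including the observation that a sign-inconsistent pair must have $i\neq j$ since diagonal entries vanish. No gaps; this is the intended argument.
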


\begin{proof}
This lemma is immediate by leveraging definitions of Laplacian matrices and sign-consistency in Definition \ref{def1}.
\end{proof}

\begin{rem}\label{rem3}
From Lemma \ref{lem1}, it is clear that the union $\mathscr{G}\left(B\right)$ of signed digraphs $\mathscr{G}\left(B^{c}\right)$ and $\mathscr{G}\left(B^{d}\right)$ may play an important role in the dynamic behaviors of signed networks described by (\ref{eq03}). As a benefit of Lemma \ref{lem2}, a candidate of $\mathscr{G}\left(B\right)$ is given by $B=\alpha B^{c}+\beta B^{d}$ for any $\alpha>0$ and $\beta>0$ in the presence of any sign-consistent signed digraphs $\mathscr{G}\left(B^{c}\right)$ and $\mathscr{G}\left(B^{d}\right)$. In particular, it follows that $L_{B^{c}}+\delta L_{B^{d}}=L_{B^{c}+\delta B^{d}}$, $\forall\delta>0$ is the Laplacian matrix of $\mathscr{G}\left(B\right)$ for $B=B^{c}+\delta B^{d}$. However, these properties do not work any longer for sign-inconsistent signed digraphs $\mathscr{G}\left(B^{c}\right)$ and $\mathscr{G}\left(B^{d}\right)$ by noting Remark \ref{rem7}.
\end{rem}

\subsection{Sign-Consistent Nonidentical Topologies}

When considering sign-consistent nonidentical topologies of signed networks described by (\ref{eq03}), their convergence behaviors are tied closely with the structural balance of signed digraphs. To reveal this property, we give a helpful lemma related to the structural balance property.

\begin{lem}\label{lem3}
Let the union $\mathscr{G}\left(B\right)$ of any two sign-consistent signed digraphs $\mathscr{G}\left(B^{c}\right)$ and $\mathscr{G}\left(B^{d}\right)$ be strongly connected. Then for $B=B^{c}+B^{d}$, the following two results hold.
\begin{enumerate}
\item[1)]
If $\mathscr{G}\left(B\right)$ is structurally balanced, there exists a unique positive definite matrix $W\in\mathbb{R}^{(n-1)\times(n-1)}$ such that
\begin{equation}\label{Eq8}
\aligned
&\left[ED\left(L_{B^{c}}+L_{B^{d}}\right)DF\right]^{\tp}W\\
&~~~~~~~~~~+W\left[ED\left(L_{B^{c}}+L_{B^{d}}\right)DF\right]=I
\endaligned
\end{equation}

\noindent where $D\in\mathcal{D}_{n}$ is such that $DL_{B}D=L_{\left|B\right|}$, and $E$ and $F$ are matrices given by
\[E=\begin{bmatrix}-1_{n-1}&I\end{bmatrix}\in\mathbb{R}^{(n-1)\times n},
~~F=\begin{bmatrix}0\\I\end{bmatrix}\in\mathbb{R}^{n\times(n-1)}.
\]

\item[2)]
If $\mathscr{G}\left(B\right)$ is structurally unbalanced, there exists a unique positive definite matrix $H\in\mathbb{R}^{n\times n}$ such that
\begin{equation}\label{Eq22}
\left(L_{B^{c}}+L_{B^{d}}\right)^{\tp}H
+H\left(L_{B^{c}}+L_{B^{d}}\right)=I.
\end{equation}
\end{enumerate}
\end{lem}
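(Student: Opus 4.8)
The plan is to establish both parts through the standard Lyapunov-equation route: a real matrix $M$ admits a unique positive definite solution $P$ to $M^{\tp}P + PM = I$ if and only if $M$ is positive stable (equivalently, $-M$ is Hurwitz), so each claim reduces to showing positive stability of the relevant matrix. For part 2), the matrix in question is $L_{B^{c}}+L_{B^{d}}$. By Lemma \ref{lem2}.2), since $\mathscr{G}(B^{c})$ and $\mathscr{G}(B^{d})$ are sign-consistent, $L_{B^{c}}+L_{B^{d}} = L_{B^{c}+B^{d}} = L_{B}$. So I need that the Laplacian $L_{B}$ of a strongly connected, structurally \emph{unbalanced} signed digraph is positive stable. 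This is a known fact (it follows, e.g., from the gauge-transformation/irreducibility arguments in \cite{a:13} together with the Perron--Frobenius and $M$-matrix theory of \cite{hj:91}): strong connectivity makes $L_{|B|}$ irreducible, and structural unbalance forces $L_{B}$ to be a proper perturbation of an irreducible singular $M$-matrix that destroys the zero eigenvalue, pushing all eigenvalues into the open right half-plane. I would cite this and then invoke the Lyapunov characterization to get the unique positive definite $H$.

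For part 1), the subtlety is that when $\mathscr{G}(B)$ is structurally \emph{balanced}, $L_{B^{c}}+L_{B^{d}}=L_{B}$ is singular: the gauge transformation $D\in\mathcal{D}_{n}$ with $DL_{B}D=L_{|B|}$ shows $L_{B}D1_{n}$ is in the kernel (since $L_{|B|}1_{n}=0$), so $L_{B^{c}}+L_{B^{d}}$ cannot itself be positive stable, and the raw Lyapunov equation has no positive definite solution. The matrices $E$ and $F$ are exactly the tool to quotient out this one-dimensional kernel: I would first note $EF = I_{n-1}$ and that $F$ spans a complement to $\mathrm{span}\{D1_{n}\}$ after the gauge change, so $ED(L_{B^{c}}+L_{B^{d}})DF$ is the compression of the gauge-transformed Laplacian $L_{|B|}$ to the $(n-1)$-dimensional quotient. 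The key step is then to show this compressed matrix is positive stable. I would argue that $L_{|B|}$, being the Laplacian of the strongly connected unsigned digraph $\mathscr{G}(|B|)$, has a simple zero eigenvalue with all other eigenvalues in the open right half-plane, and that the specific choice $E=[-1_{n-1}\ \ I]$, $F=[0;\,I]^{\tp}$ realizes the standard reduced-order Laplacian whose spectrum is precisely the nonzero spectrum of $L_{|B|}$ (this is the well-known reduction used for consensus Laplacians). Hence $ED(L_{B^{c}}+L_{B^{d}})DF$ is positive stable, and the Lyapunov characterization again yields the unique positive definite $W$.

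The main obstacle is the bookkeeping in part 1): verifying carefully that $ED(L_{B^{c}}+L_{B^{d}})DF$ has exactly the nonzero eigenvalues of $L_{|B|}$ and no spurious ones. Concretely, I would exhibit the similarity: with $T=\begin{bmatrix}1_{n}&F\end{bmatrix}\in\mathbb{R}^{n\times n}$ (nonsingular, since its first column is $1_{n}$ and the remaining columns are the last $n-1$ standard basis vectors), one computes $T^{-1} = \begin{bmatrix}1&0\\ -1_{n-1}& I\end{bmatrix}$-type structure, and $T^{-1} L_{|B|} T = \begin{bmatrix}0 & \star\\ 0 & E L_{|B|} F\end{bmatrix}$ because $L_{|B|}1_{n}=0$. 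This block-triangular form shows $\mathrm{spec}(L_{|B|}) = \{0\}\cup\mathrm{spec}(EL_{|B|}F)$, so $EL_{|B|}F$ is positive stable exactly because $0$ is a simple eigenvalue of the strongly-connected unsigned Laplacian $L_{|B|}$ with everything else in the right half-plane. Substituting $DL_{B}D=L_{|B|}$ and $L_{B}=L_{B^{c}}+L_{B^{d}}$ (by sign-consistency, Lemma \ref{lem2}.2)) gives $EL_{|B|}F = ED(L_{B^{c}}+L_{B^{d}})DF$, closing the argument. Uniqueness and positive definiteness of both $W$ and $H$ then come for free from the standard fact that $M^{\tp}P+PM=Q$ with $Q\succ0$ and $M$ positive stable has $P=\int_{0}^{\infty}e^{-M^{\tp}t}Qe^{-Mt}\,dt\succ0$ as its unique solution.
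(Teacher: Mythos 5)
Your proposal is correct and follows essentially the same route as the paper, which simply invokes Lemma~\ref{lem2} together with Theorems~4.1 and 4.2 of \cite{dm:18} and the Lyapunov stability theorem; your similarity argument with $T=\begin{bmatrix}1_{n}&F\end{bmatrix}$ showing $\mathrm{spec}(L_{|B|})=\{0\}\cup\mathrm{spec}(EL_{|B|}F)$ is precisely the content of the cited results, spelled out. The only blemish is the stray transpose in ``$F=[0;\,I]^{\tp}$,'' which should just be the $n\times(n-1)$ stacked matrix $F$.
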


\begin{proof}
With Lemma \ref{lem2} and \cite[Theorems 4.1 and 4.2]{dm:18}, this lemma follows from the Lyapunov stability theory.
\end{proof}

As a consequence of Lemma \ref{lem2}, we can accomplish a further result of Lemma \ref{lem3} under the same connectivity and structural balance conditions.

\begin{cor}\label{cor1}
If the union $\mathscr{G}\left(B\right)$ of any two sign-consistent signed digraphs $\mathscr{G}\left(B^{c}\right)$ and $\mathscr{G}\left(B^{d}\right)$ is strongly connected, then given any $\alpha>0$ and $\beta>0$, $ED\left(\alpha L_{B^{c}}+\beta L_{B^{d}}\right)DF$ (respectively, $\alpha L_{B^{c}}+\beta L_{B^{d}}$) is positive stable provided that $\mathscr{G}\left(B\right)$ is structurally balanced (respectively, structurally unbalanced) for $B=\alpha B^{c}+\beta B^{d}$.
\end{cor}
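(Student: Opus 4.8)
The plan is to reduce Corollary \ref{cor1} to Lemma \ref{lem3} by exploiting the sign-consistency hypothesis together with the scaling freedom built into Definition \ref{def2}. First I would invoke part 2) of Lemma \ref{lem2}: since $\mathscr{G}\left(B^{c}\right)$ and $\mathscr{G}\left(B^{d}\right)$ are sign-consistent, we have $\alpha L_{B^{c}}+\beta L_{B^{d}}=L_{\alpha B^{c}+\beta B^{d}}$ for all $\alpha>0$, $\beta>0$. Setting $\widetilde{B}=\alpha B^{c}+\beta B^{d}$, the matrix in question is exactly the Laplacian $L_{\widetilde{B}}$, and by Remark \ref{rem7} (with $\alpha\beta>0$) the signed digraph $\mathscr{G}\left(\widetilde{B}\right)$ has the same vertex and arc set as $\mathscr{G}\left(B\right)=\mathscr{G}\left(B^{c}+B^{d}\right)$; in particular $\mathscr{G}\left(\widetilde{B}\right)$ is strongly connected whenever $\mathscr{G}\left(B\right)$ is, and $\mathscr{G}\left(\widetilde{B}\right)$ inherits the structural balance (respectively, unbalance) of $\mathscr{G}\left(B\right)$, since structural balance depends only on the sign pattern of the arcs, which sign-consistency preserves under positive combinations.

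Next I would observe that the decompositions $B^{c}=(\alpha B^{c})/\alpha$ and similarly for $B^{d}$, or more directly the rescaling $\widetilde{B}^{c}=\alpha B^{c}$, $\widetilde{B}^{d}=\beta B^{d}$, again give a pair of sign-consistent signed digraphs whose union (in the sense $\widetilde{B}^{c}+\widetilde{B}^{d}=\widetilde{B}$) is strongly connected and structurally balanced (respectively, unbalanced). Applying Lemma \ref{lem3} to the pair $\left(\widetilde{B}^{c},\widetilde{B}^{d}\right)$ then yields a unique positive definite $W$ (respectively, $H$) solving the Lyapunov equation \eqref{Eq8} (respectively, \eqref{Eq22}) with $L_{\widetilde{B}^{c}}+L_{\widetilde{B}^{d}}=\alpha L_{B^{c}}+\beta L_{B^{d}}$ in place of $L_{B^{c}}+L_{B^{d}}$, and with $D$ chosen so that $D L_{\widetilde{B}}D=L_{|\widetilde{B}|}$. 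A positive definite solution of $M^{\tp}W+WM=I$ (with $W\succ0$) forces $M$ to be positive stable by the standard Lyapunov argument: if $Mv=\lambda v$ with $v\neq 0$, then $v^{*}(M^{\tp}W+WM)v=2\,\mathrm{Re}(\lambda)\,v^{*}Wv=v^{*}v>0$, so $\mathrm{Re}(\lambda)>0$. This delivers positive stability of $ED\left(\alpha L_{B^{c}}+\beta L_{B^{d}}\right)DF$ in the balanced case and of $\alpha L_{B^{c}}+\beta L_{B^{d}}$ in the unbalanced case.

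The one subtlety I would be careful about — and the only real obstacle — is checking that the sign matrix $D\in\mathcal{D}_{n}$ witnessing structural balance of $\mathscr{G}\left(B\right)$ is the \emph{same} $D$ that works for $\mathscr{G}\left(\widetilde{B}\right)$, so that the $D$ appearing in \eqref{Eq8} is unambiguous across the rescaling. This is where sign-consistency is essential: $\widetilde{b}_{ij}=\alpha b^{c}_{ij}+\beta b^{d}_{ij}$ has the same sign as $b_{ij}=b^{c}_{ij}+b^{d}_{ij}$ for every $i,j$ (both equal $\sgn(b^{c}_{ij})$ when either entry is nonzero, and both vanish otherwise), hence $D\widetilde{B}D=|\widetilde{B}|\Leftrightarrow DBD=|B|$, and one may take the very $D$ supplied by Lemma \ref{lem3}. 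With that identification in hand the argument is otherwise a direct transcription, so I would keep the write-up short, citing Lemma \ref{lem2}, Remark \ref{rem7}, and Lemma \ref{lem3}, and spelling out only the eigenvalue/Lyapunov step above.
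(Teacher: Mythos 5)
Your proof is correct and follows the same route the paper intends: the paper leaves Corollary \ref{cor1} without an explicit proof, presenting it as an immediate consequence of Lemma \ref{lem2}(2) and Lemma \ref{lem3} applied to the rescaled sign-consistent pair $\left(\alpha B^{c},\beta B^{d}\right)$, which is exactly what you carry out, including the standard Lyapunov-equation argument for positive stability. Your verification that the same $D\in\mathcal{D}_{n}$ witnesses structural balance for $\alpha B^{c}+\beta B^{d}$ as for $B^{c}+B^{d}$ is the one nontrivial detail, and you handle it correctly.
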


By Lemma \ref{lem3} and Corollary \ref{cor1}, we consider $B=B^{c}+B^{d}$ and introduce an index with respect to any $\delta>1$ as
\begin{equation}\label{Eq33}
\mu=\frac{\Ds\lambda_{\max}\left(\Phi\right)\lambda_{\max}\left(\Psi\Psi^{\tp}\right)}{\Ds2(\delta-1)}
\end{equation}

\noindent where $\Phi$ and $\Psi$, respectively, satisfy
\begin{equation*}\label{}
\Phi=\left\{\aligned
&W,&&\hbox{if $\mathscr{G}\left(B\right)$ is structurally balanced;}\\
&H,&&\hbox{if $\mathscr{G}\left(B\right)$ is structurally unbalanced;}
\endaligned\right.
\end{equation*}

\noindent and
\begin{equation*}\label{}
\Psi=\left\{\aligned
&ED\left(L_{B^{c}}+\delta L_{B^{d}}\right)DF,\\
&~~~~~~~~~~~\hbox{if $\mathscr{G}\left(B\right)$ is structurally balanced;}\\
&L_{B^{c}}+\delta L_{B^{d}},\\
&~~~~~~~~~~~\hbox{if $\mathscr{G}\left(B\right)$ is structurally unbalanced.}
\endaligned\right.
\end{equation*}

\noindent Note that $\mathscr{G}\left(B\right)$ with $B=B^{c}+\delta B^{d}$ has the same connectivity and structural balance properties for any $\delta>0$. This, together with Corollary \ref{cor1}, ensures that in (\ref{Eq33}), $\lambda_{\max}\left(\Psi\Psi^{\tp}\right)>0$ under the strong connectivity of $\mathscr{G}\left(B\right)$. Consequently, $\mu>0$ can be guaranteed from the definition (\ref{Eq33}).

With the abovementioned discussions, a convergence result tied to the structural balance of signed digraphs is established for directed signed networks in the presence of sign-consistent nonidentical topologies.

\begin{thm}\label{thm1}
Consider the system (\ref{eq03}) described by any sign-consistent signed digraphs $\mathscr{G}\left(B^{c}\right)$ and $\mathscr{G}\left(B^{d}\right)$, and let $k>\mu$ be selected for any $\delta>1$. If the union $\mathscr{G}\left(B\right)$ of $\mathscr{G}\left(B^{c}\right)$ and $\mathscr{G}\left(B^{d}\right)$ is strongly connected, then under any initial conditions of $x(0)\in\mathbb{R}^{n}$ and of $y(0)\in\mathbb{R}^{n}$, the following results hold.
\begin{enumerate}
\item[1)]
Polarization can be achieved for the system (\ref{eq03}) if and only if $\mathscr{G}\left(B\right)$ is structurally balanced for $B=B^{c}+B^{d}$. Moreover, the converged solution of (\ref{eq03}) is given by
\[
\aligned
\lim_{t\to\infty}x(t)
&=\left\{\nu^{\tp}D\left[x(0)+k^{-1}y(0)\right]\right\}D1_{n}\\
\lim_{t\to\infty}y(t)
&=0
\endaligned
\]

\noindent for some $D\in\mathcal{D}_{n}$ satisfying $DL_{B}D=L_{\left|B\right|}$ and some $\nu\in\mathbb{R}^{n}$ satisfying $\nu^{\tp}\left(DL_{B}D\right)=0$ and $\nu^{\tp}1_{n}=1$.

\item[2)]
Neutralization can be achieved for the system (\ref{eq03}) such that $\lim_{t\to\infty}x(t)=0$ and $\lim_{t\to\infty}y(t)=0$ if and only if $\mathscr{G}\left(B\right)$ is structurally unbalanced for $B=B^{c}+B^{d}$.
\end{enumerate}
\end{thm}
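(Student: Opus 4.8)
The plan is to analyze the transformed system \eqref{eq04}, whose system matrix $\Gamma$ has the same spectrum as \eqref{eq03}, and to split $\Gamma$ into a ``nominal'' part governed by $L_{B^c}+L_{B^d}=L_{B}$ (sign-consistency, via Lemma~\ref{lem2}, is what makes this a genuine Laplacian) plus a perturbation proportional to $L_{B^d}$. First I would introduce the scalar $\delta>1$ so that $L_{B^c}+L_{B^d}=\delta^{-1}(L_{B^c}+\delta L_{B^d})+(1-\delta^{-1})L_{B^c}$ type rearrangements become available, and rewrite $\Gamma$ accordingly; the index $\mu$ from \eqref{Eq33} is tailored precisely so that the cross terms in a Lyapunov estimate can be absorbed once $k>\mu$. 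In the \emph{structurally balanced} case I would change coordinates by $D$ (with $DL_BD=L_{|B|}$) and then project out the consensus direction using $E$ and $F$ from Lemma~\ref{lem3}: the vector $D1_n$ spans the kernel of $L_{|B^c}+L_{|B^d}|$-type blocks, so in the quotient coordinates the reduced matrix $ED(L_{B^c}+L_{B^d})DF$ is positive stable by Corollary~\ref{cor1}. In the \emph{structurally unbalanced} case no kernel needs to be removed: $L_{B^c}+L_{B^d}$ is itself positive stable by Corollary~\ref{cor1}, and $H$ from \eqref{Eq22} serves directly as the Lyapunov certificate.

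The core step is a block-triangular / Lyapunov argument on $\Gamma$ (or its projected version). Using $\Phi$ (equal to $W$ or $H$) to build a quadratic form on the ``slow'' block and pairing it with the identity from \eqref{Eq8} or \eqref{Eq22}, I would compute $\dot V$ along \eqref{eq04}, collect the indefinite cross terms involving $\Psi=ED(L_{B^c}+\delta L_{B^d})DF$ (resp.\ $L_{B^c}+\delta L_{B^d}$), and bound them by Young's inequality; the factor $\lambda_{\max}(\Phi)\lambda_{\max}(\Psi\Psi^{\tp})$ is exactly what appears, and the damping $k$ (which multiplies the ``fast'' block in $\Gamma$) dominates these terms precisely when $k>\mu$. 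This shows that, modulo the one-dimensional consensus subspace in the balanced case, every eigenvalue of $\Gamma$ has negative real part, i.e.\ $\Gamma$ is Hurwitz on the complementary invariant subspace. Hence $Y(t)$, and therefore $X(t)=\Theta^{-1}Y(t)$, converges: in the unbalanced case to $0$ (neutralization), and in the balanced case to the projection onto $\mathrm{span}\{[\,(D1_n)^{\tp}\ 0\,]^{\tp}\}$, which after undoing $\Theta$ gives $\lim_t y(t)=0$ and $\lim_t x(t)\in\mathrm{span}\{D1_n\}$, i.e.\ $|x_i(t)|\to\theta$ for a common $\theta$ — polarization.

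To pin down the explicit limit I would use the left null vector: with $\nu^{\tp}(DL_BD)=0$, $\nu^{\tp}1_n=1$, the quantity $\nu^{\tp}D[x(t)+k^{-1}y(t)]$ (equivalently a suitable linear functional of $Y(t)$, using $\Theta=\begin{bmatrix}kI&I\\ I&0\end{bmatrix}$ so that the first block of $Y$ is $kx+y$) is a conserved quantity of \eqref{eq04} restricted to the relevant subspace, because $\nu^{\tp}D\cdot(L_{B^c}+L_{B^d})=0$ follows from $\nu^{\tp}DL_BD=0$ and sign-consistency (Lemma~\ref{lem2}). Evaluating this invariant at $t=0$ and at $t=\infty$ (where $x(t)\to c\,D1_n$, $y(t)\to0$) and using $\nu^{\tp}D(D1_n)=\nu^{\tp}1_n=1$ identifies the constant $c=\nu^{\tp}D[x(0)+k^{-1}y(0)]$, giving the stated formula; setting $\theta=|c|$ completes the ``polarization'' conclusion.

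Finally, the converse directions. If $\mathscr{G}(B)$ is structurally unbalanced, $L_B$ is nonsingular (positive stable) so $\Gamma$ is Hurwitz and the system is asymptotically stable, ruling out nonzero polarization; conversely if it is structurally balanced, the conserved quantity above is generically nonzero, so the origin is not globally attractive and neutralization fails. Hence each behavior holds ``if and only if'' as claimed. The main obstacle I anticipate is the Lyapunov cross-term bookkeeping in the balanced case: one must carry the $D$-conjugation and the $E$/$F$ projection through $\Gamma$ simultaneously, verify that the projected system is still block-structured in the right way (so that $\Psi$ really is the block that appears), and check that the kernel direction is exactly one-dimensional and invariant — this is where strong connectivity of $\mathscr{G}(B)$ and the sign-consistency hypothesis are both essential, and getting the constant $\mu$ to come out as in \eqref{Eq33} requires choosing the Young's-inequality split to match the $\delta$ in $\Psi$.
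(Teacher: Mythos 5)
Your proposal is correct and follows essentially the same route as the paper: the same $D$-conjugation and $E$/$F$ quotient in the structurally balanced case, the same block Lyapunov functions built from $W$ (respectively $H$) with the cross terms in $L_{B^{c}}+\delta L_{B^{d}}$ absorbed via a Schur-complement/Young-type bound under $k>\mu$, and the same identification of the limit through the left and right null vectors of $\Gamma$. The only cosmetic difference is that you extract the constant from the conserved functional $\nu^{\tp}D\left[x(t)+k^{-1}y(t)\right]$, whereas the paper computes $\lim_{t\to\infty}e^{\Gamma t}=w_{r}w_{l}^{\tp}$ directly; these are equivalent.
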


\begin{proof}
We develop a Lyapunov-based analysis method to prove this theorem. For the details, see the Appendix A.
\end{proof}

\begin{rem}\label{rem4}
In Theorem \ref{thm1}, the convergence behaviors are analyzed essentially for signed networks subject to nonidentical directed topologies. Like the conventional behavior results for first-order signed networks exploited in, e.g., \cite{a:13,pmc:16,mdj:16,dm:18}, polarization and neutralization are developed in Theorem \ref{thm1} and closely tied to the structural balance and unbalance of signed digraphs, respectively. However, Theorem \ref{thm1} encounters dealing with the effect of nonidentical topologies on behaviors of signed networks, which is addressed by exploring the sign-consistence property of signed digraphs. This actually is a new challenging problem that has not been considered for directed signed networks in the literature.
\end{rem}

\subsection{Sign-Inconsistent Nonidentical Topologies}

When considering any sign-inconsistent signed digraphs, the union of them is always rendered with the structural unbalance. Another effect resulting from sign-inconsistence is that a linear combination of the Laplacian matrices of signed digraphs may not be employed as the Laplacian matrix of their union. Hence, the Lyapunov-based convergence analysis method in Theorem \ref{thm1} may not be directly implemented for signed networks subject to sign-inconsistent nonidentical topologies. To overcome this issue, we develop helpful properties of sign-inconsistency by exploring the properties of $M$-matrices based on the separation of cooperative and antagonistic interactions.

Let us revisit the algebraic equivalent transformation (\ref{eq04}) of the system (\ref{eq03}). However, it follows immediately from Lemma \ref{lem2} that $L_{B^{c}}+L_{B^{d}}=L_{B^{c}+B^{d}}$ may not hold any longer owing to the sign-inconsistence of any signed digraphs $\mathscr{G}\left(B^{c}\right)$ and $\mathscr{G}\left(B^{d}\right)$. This results in that we can not establish the stability properties of $L_{B^{c}}+L_{B^{d}}$ by directly resorting to the signed digraph $\mathscr{G}\left(B^{c}+B^{d}\right)$. With such an observation, we consider $B^{c}=B^{c+}+B^{c-}$ and $B^{d}=B^{d+}+B^{d-}$ for $B^{c+}\geq0$, $B^{d+}\geq0$, $B^{c-}\leq0$ and $B^{d-}\leq0$, and can obtain
\begin{equation}\label{Eq27}
\aligned
L_{B^{c}}+L_{B^{d}}
&=\left(L_{B^{c+}}+L_{B^{d+}}\right)+\left(\Delta_{\left|B^{c-}\right|}+\Delta_{\left|B^{d-}\right|}\right)\\
&~~~-\left(B^{c-}+B^{d-}\right)\\
&=\left(L_{B^{c+}+B^{d+}}
+\Delta_{\left|B^{c-}+B^{d-}\right|}\right)-\left(B^{c-}+B^{d-}\right)
\endaligned
\end{equation}

\noindent for which we can verify
\begin{equation}\label{Eq43}
L_{B^{c+}+B^{d+}}+\Delta_{\left|B^{c-}+B^{d-}\right|}\in\mathcal{Z}_{n}\\
\end{equation}

\noindent and
\begin{equation}\label{Eq28}
-\left(B^{c-}+B^{d-}\right)\geq0.
\end{equation}

\noindent Further, an $M$-matrix property of $L_{B^{c+}+B^{d+}}+\Delta_{\left|B^{c-}+B^{d-}\right|}\in\mathcal{Z}_{n}$ in (\ref{Eq43}) can be established in the following lemma.

\begin{lem}\label{lem6}
Consider any sign-inconsistent signed digraphs $\mathscr{G}\left(B^{c}\right)$ and $\mathscr{G}\left(B^{d}\right)$. If the union of $\mathscr{G}\left(B^{c}\right)$ and $\mathscr{G}\left(B^{d}\right)$ is strongly connected, then $L_{B^{c+}+B^{d+}}+\Delta_{\left|B^{c-}+B^{d-}\right|}$ is ensured to be an $M$-matrix.
\end{lem}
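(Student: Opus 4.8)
The plan is to show that the matrix $M := L_{B^{c+}+B^{d+}}+\Delta_{\left|B^{c-}+B^{d-}\right|}$, which already lies in $\mathcal{Z}_{n}$ by (\ref{Eq43}), is positive stable, hence an $M$-matrix by the characterization recalled after \cite[Definition 2.5.2]{hj:91}. The key idea is to recognize $M$ as a ``Laplacian-plus-leakage'' matrix of an unsigned digraph: writing $C^{+}=B^{c+}+B^{d+}\geq0$ and $C^{-}=B^{c-}+B^{d-}\leq0$, we have $M = L_{C^{+}} + \Delta_{|C^{-}|}$, where $L_{C^{+}}$ is the (diagonally dominant, singular) Laplacian of the unsigned digraph $\mathscr{G}(C^{+})$ and $\Delta_{|C^{-}|}$ is a nonnegative diagonal ``grounding'' term. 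The obstacle — and the only place where sign-inconsistency is genuinely used — is to argue that the grounding term is nontrivial in a way that reaches \emph{every} strongly connected component of $\mathscr{G}(C^{+})$; sign-inconsistency guarantees that at least one diagonal entry of $\Delta_{|C^{-}|}$ is strictly positive, and strong connectivity of the union $\mathscr{G}(B^{c})\cup\mathscr{G}(B^{d})$ will be used to propagate that strict positivity through the whole graph.

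First I would make the connection to sign-inconsistency explicit: by Definition \ref{def1}, sign-inconsistency means there exist indices $i_{0}\neq j_{0}$ with $b_{i_{0}j_{0}}^{c}b_{i_{0}j_{0}}^{d}<0$, so one of these two entries is negative, whence $(C^{-})_{i_{0}j_{0}}<0$ and therefore the $i_{0}$-th diagonal entry of $\Delta_{|C^{-}|}$ is strictly positive. Thus $M$ is not simply a Laplacian; it has a strict row-sum surplus in at least row $i_{0}$. Next I would verify that $M$ is weakly diagonally dominant everywhere: for each $i$, the $i$-th row sum of $M$ equals $\big(\sum_{j} l^{C^{+}}_{ij}\big) + (\Delta_{|C^{-}|})_{ii} = 0 + (\Delta_{|C^{-}|})_{ii}\geq 0$, since $L_{C^{+}}$ has zero row sums by construction; combined with $M\in\mathcal{Z}_{n}$ this gives $m_{ii} = -\sum_{j\neq i} m_{ij} + (\Delta_{|C^{-}|})_{ii} = \sum_{j\neq i}|m_{ij}| + (\Delta_{|C^{-}|})_{ii} \geq \sum_{j\neq i}|m_{ij}|$, so $M$ is a weakly diagonally dominant $Z$-matrix with at least one row strictly dominant.

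To upgrade weak dominance plus one strict row to positive stability, I would invoke the standard irreducibility-type argument (Gershgorin plus a walk argument, as in the classical theory of weakly chained diagonally dominant matrices, cf. \cite{hj:91}). The point is that the digraph encoding the off-diagonal nonzeros of $M$ contains, for every vertex $v_{i}$, a directed walk to the strictly dominant vertex $v_{i_{0}}$: indeed, the off-diagonal support of $M$ is exactly $\mathcal{E}^{c}\cup\mathcal{E}^{d}$ because $m_{ij}\neq 0 \Leftrightarrow (C^{+})_{ij}\neq 0$ or $(C^{-})_{ij}\neq 0 \Leftrightarrow b_{ij}^{c}\neq0$ or $b_{ij}^{d}\neq0$, i.e.\ $(v_{j},v_{i})\in\mathcal{E}^{c}\cup\mathcal{E}^{d}$, and by hypothesis the union $\mathscr{G}(B^{c})\cup\mathscr{G}(B^{d})$ is strongly connected. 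Hence $M$ is a weakly chained diagonally dominant $Z$-matrix, which is known to be nonsingular with all eigenvalues in the open right half-plane; by Gershgorin all eigenvalues of $M$ already lie in the closed right half-plane, and the chained-dominance condition rules out a zero eigenvalue (any null vector would have to be constant in modulus on a strongly connected support, contradicting the strict inequality in row $i_{0}$). Therefore $M$ is positive stable, i.e.\ an $M$-matrix.

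The main obstacle I anticipate is \emph{not} the stability bookkeeping but the clean statement of the walk/chained-dominance step so that it quotes an available result from \cite{hj:91}; if a directly citable statement is awkward, the fallback is a short self-contained contradiction argument: suppose $Mz=0$ with $z\neq 0$, pick $i^{*}\in\arg\max_{i}|z_{i}|$, use row $i^{*}$ of the weak-dominance inequality to force $|z_{j}|=|z_{i^{*}}|$ for every $j$ with $m_{i^{*}j}\neq 0$ and (because $M\in\mathcal{Z}_{n}$ has real nonpositive off-diagonals while $m_{i^{*}i^{*}}>0$) in fact $z_{j}=z_{i^{*}}$ for such $j$; iterating along walks in the strongly connected support propagates $z_{j}=z_{i^{*}}$ to all $j$, and then evaluating row $i_{0}$, which is \emph{strictly} dominant, yields $0 = \sum_{j} m_{i_{0}j} z_{j} = z_{i^{*}}\sum_{j} m_{i_{0}j} = z_{i^{*}}(\Delta_{|C^{-}|})_{i_{0}i_{0}} \neq 0$, a contradiction. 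Combining nonsingularity with the Gershgorin inclusion in the closed right half-plane and continuity (perturb $M$ to $M+\epsilon I$ to see no eigenvalue can cross the imaginary axis) then gives positive stability, completing the proof.
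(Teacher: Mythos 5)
Your overall strategy --- showing directly that $M=L_{B^{c+}+B^{d+}}+\Delta_{\left|B^{c-}+B^{d-}\right|}$ is a weakly diagonally dominant $Z$-matrix with at least one strictly dominant row and then upgrading to positive stability via a chained-dominance/walk argument --- is a legitimate alternative to the paper's route (the paper instead appends an auxiliary vertex $v_{0}$, forms the $(n+1)$-vertex unsigned digraph $\overline{\mathscr{G}}\left(\overline{A}\right)$ of (\ref{Eq29}), proves it has a spanning tree rooted at $v_{0}$, and reads off positive stability of $M$ from the block structure of $L_{\overline{A}}$ via \cite[Lemma 3.3]{rb:05}). However, your execution breaks at exactly the step where all the difficulty lives. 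You assert that the off-diagonal support of $M$ is $\mathcal{E}^{c}\cup\mathcal{E}^{d}$ ``because $m_{ij}\neq0\Leftrightarrow(C^{+})_{ij}\neq0$ or $(C^{-})_{ij}\neq0$.'' This is false: for $i\neq j$ one has $m_{ij}=-\left(B^{c+}+B^{d+}\right)_{ij}$, so your $C^{-}=B^{c-}+B^{d-}$ contributes nothing off-diagonal --- it is absorbed entirely into the diagonal surplus $\Delta_{\left|C^{-}\right|}$. Consequently the digraph of off-diagonal nonzeros of $M$ is $\mathscr{G}\left(B^{c+}+B^{d+}\right)$, which need not be strongly connected even though the union is (it can even have vertices with no positive in-arcs at all if enough arcs of the union are purely negative). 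Hence the claimed walk from every row to the strictly dominant row $i_{0}$, and likewise the propagation of $z_{j}=z_{i^{*}}$ ``to all $j$'' in your fallback null-vector argument, are unjustified.

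The missing idea is precisely what the paper's spanning-tree construction supplies: every vertex that is the \emph{head} of some negative arc acquires a strict diagonal surplus (is ``grounded''), and one must show that every vertex is reachable along \emph{positive} arcs from \emph{some} grounded vertex (not necessarily $v_{i_{0}}$). This does follow from strong connectivity of the union --- take a path in the union from a grounded vertex to $v_{i}$ and restart it at the head of the last negative arc it uses, which is itself grounded; this is the role of the index $\widetilde{m}$ in the paper's proof --- but it is a statement about the interplay between the positive-arc subgraph and the set of grounded vertices, not about strong connectivity of the off-diagonal support of $M$. With that repair your weakly-chained-diagonal-dominance route goes through; without it, the proof does not establish the lemma.
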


\begin{proof}
See the Appendix B.
\end{proof}

Based on Lemma \ref{lem6} and by taking advantage of the equivalence between the Statements 2.5.3.1 and 2.5.3.17 in \cite[Theorem 2.5.3]{hj:91}, we can derive that the $M$-matrix $L_{B^{c+}+B^{d+}}+\Delta_{\left|B^{c-}+B^{d-}\right|}$ is nonsingular and has a nonnegative inverse matrix $\left(L_{B^{c+}+B^{d+}}+\Delta_{\left|B^{c-}+B^{d-}\right|}\right)^{-1}\geq0$. If we insert these facts into (\ref{Eq27}), then we can deduce
\begin{equation}\label{Eq42}
\aligned
L_{B^{c}}&+L_{B^{d}}
=\left(L_{B^{c+}+B^{d+}}
+\Delta_{\left|B^{c-}+B^{d-}\right|}\right)\\
&\times\left[I-\left(L_{B^{c+}+B^{d+}}
+\Delta_{\left|B^{c-}+B^{d-}\right|}\right)^{-1}\left(B^{c-}+B^{d-}\right)\right]
\endaligned
\end{equation}

\noindent which is the product of an $M$-matrix and a nonnegative matrix. To proceed further with exploring this fact, we present a matrix stability property with respect to the sign-inconsistency.

\begin{lem}\label{lem5}
Consider any sign-inconsistent signed digraphs $\mathscr{G}\left(B^{c}\right)$ and $\mathscr{G}\left(B^{d}\right)$. If the union of $\mathscr{G}\left(B^{c}\right)$ and $\mathscr{G}\left(B^{d}\right)$ is strongly connected, then $L_{B^{c}}+L_{B^{d}}$ is positive stable. Further, there exists a unique positive definite matrix $H\in\mathbb{R}^{n\times n}$ such that (\ref{Eq22}) holds.
\end{lem}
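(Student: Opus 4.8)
The plan is to split the statement into two parts. First I would establish that $L_{B^{c}}+L_{B^{d}}$ is positive stable; granting this, the existence and uniqueness of a positive definite $H$ satisfying (\ref{Eq22}) is immediate from the classical Lyapunov theorem applied to the Hurwitz matrix $-(L_{B^{c}}+L_{B^{d}})$ with the positive definite right-hand side $I$. So the whole burden lies in the positive stability of $L_{B^{c}}+L_{B^{d}}$. For that I would start from the splitting already recorded in (\ref{Eq27}) and (\ref{Eq28}) together with Lemma \ref{lem6}: setting $M=L_{B^{c+}+B^{d+}}+\Delta_{|B^{c-}+B^{d-}|}$ and $P=-(B^{c-}+B^{d-})\geq0$, one has $L_{B^{c}}+L_{B^{d}}=M+P$ where $M$ is a nonsingular $M$-matrix.

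Next I would bring in the unsigned companion matrix. Fix a scalar $s\geq\max_{i\in\mathcal{I}_{n}}\sum_{k}(|b_{ik}^{c}|+|b_{ik}^{d}|)$ and put $Q=sI-M$, so that $Q\geq0$ and $L_{B^{c}}+L_{B^{d}}=sI-(Q-P)$. A short computation using $B^{c}=B^{c+}+B^{c-}$ and $B^{d}=B^{d+}+B^{d-}$ gives $sI-(Q+P)=M-P=L_{|B^{c}|+|B^{d}|}$, the Laplacian of the unsigned digraph whose arc set is exactly that of $\mathscr{G}(B^{c})\cup\mathscr{G}(B^{d})$. Write $R=Q+P\geq0$. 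Since the union is strongly connected, the off-diagonal pattern of $R$ is that of a strongly connected digraph, so $R$ is irreducible; and since $R1_{n}=s1_{n}$ with $1_{n}>0$, Perron--Frobenius gives $\rho(R)=s$. Because $|Q-P|\leq Q+P=R$ entrywise, every eigenvalue $\lambda$ of $Q-P$ satisfies $|\lambda|\leq\rho(|Q-P|)\leq\rho(R)=s$, hence $\mathrm{Re}\,(s-\lambda)\geq0$; that is, $L_{B^{c}}+L_{B^{d}}=sI-(Q-P)$ has all its eigenvalues in the closed right half-plane.

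The last, and main, step is to upgrade this to strict positive stability, and this is precisely where sign-inconsistency must enter. Suppose, to the contrary, that $L_{B^{c}}+L_{B^{d}}$ has an eigenvalue on the imaginary axis; then $Q-P$ has an eigenvalue $\lambda$ with $\mathrm{Re}\,\lambda=s$, and combined with $|\lambda|\leq s$ this forces $\lambda=s$. Tracing the inequalities $s\leq\rho(Q-P)\leq\rho(|Q-P|)\leq\rho(R)=s$ then yields $\rho(|Q-P|)=\rho(R)$ while $0\leq|Q-P|\leq R$ with $R$ irreducible, so the strict monotonicity of the Perron root (see, e.g., \cite{hj:91}) forces $|Q-P|=R$. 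Written out entrywise this reads $|b_{ij}^{c}+b_{ij}^{d}|=|b_{ij}^{c}|+|b_{ij}^{d}|$ for all $i,j\in\mathcal{I}_{n}$, i.e., $b_{ij}^{c}b_{ij}^{d}\geq0$ for all $i,j\in\mathcal{I}_{n}$ --- exactly the sign-consistency of $\mathscr{G}(B^{c})$ and $\mathscr{G}(B^{d})$, contradicting the hypothesis. Hence $L_{B^{c}}+L_{B^{d}}$ is positive stable, and (\ref{Eq22}) admits a unique positive definite solution $H$. The genuinely delicate ingredients are the identification $\rho(R)=s$ and the strict Perron--Frobenius comparison; everything else is bookkeeping around the $M$-matrix splitting furnished by Lemma \ref{lem6}.
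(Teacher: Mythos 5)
Your proof is correct, but it takes a genuinely different route from the paper's. The paper works with the multiplicative factorization (\ref{Eq42}): it first uses the Ger\v{s}hgorin circle theorem (diagonal dominance with positive diagonal entries) to place every eigenvalue of $L_{B^{c}}+L_{B^{d}}$ either at the origin or in the open right half-plane, and then excludes the origin by computing $\det\left(L_{B^{c}}+L_{B^{d}}\right)=\det\left(L_{B^{c+}+B^{d+}}+\Delta_{\left|B^{c-}+B^{d-}\right|}\right)\det\left(\Xi\right)>0$, where the positive stability of $\Xi=I-\left(L_{B^{c+}+B^{d+}}+\Delta_{\left|B^{c-}+B^{d-}\right|}\right)^{-1}\left(B^{c-}+B^{d-}\right)$ is imported from \cite[Lemma 4.2 and Corollary 4.2]{dm:18}. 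You instead work additively: writing $L_{B^{c}}+L_{B^{d}}=sI-(Q-P)$ and comparing with the unsigned Laplacian $L_{\left|B^{c}\right|+\left|B^{d}\right|}=sI-R$, you obtain the closed-half-plane location from $\rho(Q-P)\leq\rho\left(\left|Q-P\right|\right)\leq\rho(R)=s$, and you rule out the boundary by the Wielandt-type strict comparison: $\left|Q-P\right|=R$ entrywise is exactly sign-consistency, so sign-inconsistency together with the irreducibility of $R$ forces $\rho\left(\left|Q-P\right|\right)<s$. Your argument is self-contained modulo standard Perron--Frobenius facts --- it does not actually use the nonsingular-$M$-matrix conclusion of Lemma \ref{lem6} (only that $Q=sI-M\geq0$, which follows from the sign pattern in (\ref{Eq27})--(\ref{Eq28})), nor the external results of \cite{dm:18} --- and it isolates very cleanly where sign-inconsistency enters, namely in the strict triangle inequality $\left|b_{ij}^{c}+b_{ij}^{d}\right|<\left|b_{ij}^{c}\right|+\left|b_{ij}^{d}\right|$ at an inconsistent entry; the paper's version instead reuses the $M$-matrix machinery it has already built. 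Two trivial remarks: the strict monotonicity of the Perron root under $0\leq C\leq R$, $C\neq R$, $R$ irreducible is found in \cite{hj:85} rather than \cite{hj:91}; and once positive stability is established, your reduction of the unique positive definite $H$ in (\ref{Eq22}) to the classical Lyapunov theorem is exactly what the paper does.
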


\begin{proof}
See the Appendix B.
\end{proof}

By following the same steps as adopted in the derivation of Lemma \ref{lem5}, we can present a more general result based on the sign-inconsistency property of signed digraphs.

\begin{cor}\label{cor2}
For any $\alpha>0$ and $\beta>0$, if the union of any two sign-inconsistent signed digraphs $\mathscr{G}\left(B^{c}\right)$ and $\mathscr{G}\left(B^{d}\right)$ is strongly connected, then $\alpha L_{B^{c}}+\beta L_{B^{d}}$ is positive stable.
\end{cor}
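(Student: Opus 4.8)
The plan is to reduce the statement to Lemma~\ref{lem5} by absorbing the scalars $\alpha$ and $\beta$ into the adjacency matrices. First I would note that, directly from the definition of the Laplacian matrix, $L_{\alpha B^{c}}=\alpha L_{B^{c}}$ and $L_{\beta B^{d}}=\beta L_{B^{d}}$ whenever $\alpha>0$ and $\beta>0$: positive scaling changes neither the neighbor index sets nor the relative magnitudes of the entries, so $l^{\alpha B^{c}}_{ij}=\alpha l^{B^{c}}_{ij}$ for all $i,j\in\mathcal{I}_{n}$, and likewise for $B^{d}$. Hence $\alpha L_{B^{c}}+\beta L_{B^{d}}=L_{\alpha B^{c}}+L_{\beta B^{d}}$, and it suffices to prove that the right-hand side is positive stable.

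Next I would check that the pair $\mathscr{G}\left(\alpha B^{c}\right)$ and $\mathscr{G}\left(\beta B^{d}\right)$ still satisfies the hypotheses of Lemma~\ref{lem5}. Since $\alpha b^{c}_{ij}\neq0\Leftrightarrow b^{c}_{ij}\neq0$ and $\beta b^{d}_{ij}\neq0\Leftrightarrow b^{d}_{ij}\neq0$, the arc sets are unchanged, so $\mathscr{G}\left(\alpha B^{c}\right)=\mathscr{G}\left(B^{c}\right)$ and $\mathscr{G}\left(\beta B^{d}\right)=\mathscr{G}\left(B^{d}\right)$; consequently their union coincides with $\mathscr{G}\left(B^{c}\right)\cup\mathscr{G}\left(B^{d}\right)$, which is strongly connected by assumption. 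Moreover, sign-inconsistency is preserved: if there exist indices $i,j$ with $b^{c}_{ij}b^{d}_{ij}<0$, then $(\alpha b^{c}_{ij})(\beta b^{d}_{ij})=\alpha\beta\, b^{c}_{ij}b^{d}_{ij}<0$ because $\alpha\beta>0$, so $\mathscr{G}\left(\alpha B^{c}\right)$ and $\mathscr{G}\left(\beta B^{d}\right)$ are sign-inconsistent in the sense of Definition~\ref{def1}.

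With these two observations, Lemma~\ref{lem5} applies to the pair $\mathscr{G}\left(\alpha B^{c}\right)$, $\mathscr{G}\left(\beta B^{d}\right)$ and yields that $L_{\alpha B^{c}}+L_{\beta B^{d}}$ is positive stable, which together with the first step gives the positive stability of $\alpha L_{B^{c}}+\beta L_{B^{d}}$. Equivalently, one may repeat the derivation of Lemmas~\ref{lem6}--\ref{lem5} with $B^{c}$, $B^{d}$ replaced throughout by $\alpha B^{c}$, $\beta B^{d}$: decompose $\alpha B^{c}=\alpha B^{c+}+\alpha B^{c-}$ and $\beta B^{d}=\beta B^{d+}+\beta B^{d-}$, write $\alpha L_{B^{c}}+\beta L_{B^{d}}=\bigl(L_{\alpha B^{c+}+\beta B^{d+}}+\Delta_{|\alpha B^{c-}+\beta B^{d-}|}\bigr)-(\alpha B^{c-}+\beta B^{d-})$ as the product of the $M$-matrix $L_{\alpha B^{c+}+\beta B^{d+}}+\Delta_{|\alpha B^{c-}+\beta B^{d-}|}$ (an $M$-matrix by exactly the strong-connectivity argument of Lemma~\ref{lem6}) and the nonnegative matrix $I-\bigl(L_{\alpha B^{c+}+\beta B^{d+}}+\Delta_{|\alpha B^{c-}+\beta B^{d-}|}\bigr)^{-1}(\alpha B^{c-}+\beta B^{d-})$, and then conclude as in Lemma~\ref{lem5}. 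I do not expect a genuine obstacle here; the only point needing care is the routine verification that positive scaling leaves both the strong connectivity of the union and the sign-inconsistency of Definition~\ref{def1} intact, which is immediate from $\alpha>0$, $\beta>0$, and hence $\alpha\beta>0$.
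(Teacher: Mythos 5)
Your proof is correct and matches the paper's intent: the paper justifies Corollary~\ref{cor2} by saying one follows the same steps as in Lemma~\ref{lem5}, and your rescaling reduction ($L_{\alpha B^{c}}=\alpha L_{B^{c}}$ for $\alpha>0$, with arc sets, strong connectivity of the union, and sign-inconsistency all preserved under positive scaling) is a clean and valid way to make that precise, letting you invoke Lemma~\ref{lem5} as a black box. No gaps.
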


Based on Lemma \ref{lem5} and Corollary \ref{cor2}, we consider any $\delta>1$ to define an index with respect to any sign-inconsistent signed digraphs $\mathscr{G}\left(B^{c}\right)$ and $\mathscr{G}\left(B^{d}\right)$ as
\begin{equation}\label{Eq41}
\zeta=\frac{\Ds\lambda_{\max}\left(H\right)
\lambda_{\max}\left(\left(L_{B^{c}}+\delta L_{B^{d}}\right)
\left(L_{B^{c}}+\delta L_{B^{d}}\right)^{\tp}\right)}{\Ds2(\delta-1)}
\end{equation}

\noindent where $H$ is determined with (\ref{Eq22}). By Corollary \ref{cor2}, $L_{B^{c}}+\delta L_{B^{d}}$ is positive stable for any $\delta>0$ when the union of $\mathscr{G}\left(B^{c}\right)$ and $\mathscr{G}\left(B^{d}\right)$ is strongly connected. We can hence obtain a positive definite matrix: $\left(L_{B^{c}}+\delta L_{B^{d}}\right)\left(L_{B^{c}}+\delta L_{B^{d}}\right)^{\tp}$. This, together with the positive definiteness of $H$, guarantees $\zeta>0$ for any $\delta>1$ under the topology conditions of Lemma \ref{lem5}. In particular, we may choose specific values of $\delta$ to determine $\zeta$ with (\ref{Eq41}).

With the above development, we can establish the following theorem to achieve the convergence analysis of directed signed networks subject to sign-inconsistent nonidentical topologies.

\begin{thm}\label{thm2}
Consider the system (\ref{eq03}) described by any sign-inconsistent signed digraphs $\mathscr{G}\left(B^{c}\right)$ and $\mathscr{G}\left(B^{d}\right)$, and let $k>\zeta$ be chosen for any $\delta>1$. If the union of $\mathscr{G}\left(B^{c}\right)$ and $\mathscr{G}\left(B^{d}\right)$ is strongly connected, then neutralization can be achieved such that for any $x(0)\in\mathbb{R}^{n}$ and $y(0)\in\mathbb{R}^{n}$, $\lim_{t\to\infty}x(t)=0$ and $\lim_{t\to\infty}y(t)=0$ hold.
\end{thm}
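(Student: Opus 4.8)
The plan is to mirror the Lyapunov-based strategy used for Theorem \ref{thm1}, but replacing the structural-balance machinery (which is unavailable, since any sign-inconsistent pair has a structurally unbalanced union) with the $M$-matrix facts assembled in Lemmas \ref{lem6} and \ref{lem5}. First I would work with the transformed system \eqref{eq04}, $\dot{Y}(t)=\Gamma Y(t)$, where $\Gamma$ has the $2\times2$ block structure with blocks $0$, $-k(L_{B^c}+L_{B^d})$, $I$, and $-(kI+L_{B^c})$. Since the original system \eqref{eq03} and \eqref{eq04} are related by the nonsingular $\Theta$ of Lemma \ref{lem1}, it suffices to show $\Gamma$ is Hurwitz stable; then $\lim_{t\to\infty}Y(t)=0$, hence $\lim_{t\to\infty}X(t)=0$, i.e. $\lim_{t\to\infty}x(t)=0$ and $\lim_{t\to\infty}y(t)=0$.

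To prove $\Gamma$ is Hurwitz, I would exhibit a block-diagonal Lyapunov matrix. Using the $H>0$ from Lemma \ref{lem5} satisfying $(L_{B^c}+L_{B^d})^{\tp}H+H(L_{B^c}+L_{B^d})=I$, set $P=\diag\{kH,\;\rho H\}$ (or a similar two-parameter choice) for suitable $\rho>0$, and compute $\Gamma^{\tp}P+P\Gamma$. The cross terms involving the off-diagonal blocks $-k(L_{B^c}+L_{B^d})$ and $I$ combine, via the Lyapunov identity for $H$, into a $-k\,\mathrm{diag}\{I,\ast\}$-type negative contribution on the $(1,1)$ block, while the $(2,2)$ block picks up $-2\rho H(kI+L_{B^c})$, whose symmetric part is controlled using item 1) of Lemma \ref{lem2} (positive stability of $kI+L_{B^c}$), possibly after a further congruence. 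The residual indefinite term — arising precisely because $L_{B^c}+L_{B^d}$ is only positive stable, not symmetric, so $H(L_{B^c}+L_{B^d})$ need not be symmetric — is where the index $\zeta$ in \eqref{Eq41} enters: introducing the parameter $\delta>1$ and writing $L_{B^c}+\delta L_{B^d}=(L_{B^c}+L_{B^d})+(\delta-1)L_{B^d}$ lets me bound the cross term by $\tfrac{1}{2(\delta-1)}\lambda_{\max}(H)\lambda_{\max}\!\big((L_{B^c}+\delta L_{B^d})(L_{B^c}+\delta L_{B^d})^{\tp}\big)=\zeta$ times an identity block, and the hypothesis $k>\zeta$ forces $\Gamma^{\tp}P+P\Gamma<0$.

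The main obstacle is the same one flagged in the paper just before the theorem: $L_{B^c}+L_{B^d}$ cannot be identified with the Laplacian of any single signed digraph, so none of the structural-balance eigenvector/kernel structure used for Theorem \ref{thm1} is available. The work of overcoming this is front-loaded into Lemma \ref{lem5}: the decomposition \eqref{Eq42} expresses $L_{B^c}+L_{B^d}$ as an $M$-matrix times $I-(M\text{-matrix})^{-1}(B^{c-}+B^{d-})$, with the second factor $I+(\text{nonneg})(\text{nonneg})$ having all eigenvalues with real part $\geq1$, whence positive stability of the product and the existence of the Lyapunov solution $H$. Given that, the remaining difficulty is purely the quantitative bookkeeping in the cross-term estimate — choosing $\rho$ and exploiting $\delta>1$ so that the threshold comes out exactly as $\zeta$ — together with transferring the conclusion from \eqref{eq04} back to \eqref{eq03} through $\Theta$, which is routine. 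I would also note that, unlike Theorem \ref{thm1}, there is no ``only if'' direction to argue and no nonzero converged value to compute: neutralization is the only possible outcome, consistent with structural unbalance of the union.
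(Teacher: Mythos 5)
Your overall strategy --- a quadratic Lyapunov function built from the solution $H$ of (\ref{Eq22}) supplied by Lemma \ref{lem5}, a parameter $\delta>1$ to absorb the cross term, a Schur-complement/eigenvalue bound producing exactly the threshold $\zeta$ of (\ref{Eq41}), and transfer back through $\Theta$ --- is the paper's strategy: the paper simply reuses $V_{2}(t)$ from (\ref{Eq23}) and repeats Case ii) of the proof of Theorem \ref{thm1}, with Lemma \ref{lem3} replaced by Lemma \ref{lem5}. However, your concrete Lyapunov construction has a genuine flaw. For the transformed system $\dot{Y}=\Gamma Y$ with
\[
\Gamma=\begin{bmatrix}0&-k\left(L_{B^{c}}+L_{B^{d}}\right)\\ I&-\left(kI+L_{B^{c}}\right)\end{bmatrix},
\]
a \emph{block-diagonal} $P=\diag\{kH,\rho H\}$ cannot work: since the $(1,1)$ block of $\Gamma$ is $0$ and its $(2,1)$ block is $I$, the $(1,1)$ block of $\Gamma^{\tp}P+P\Gamma$ equals $P_{21}+P_{12}$, which is identically zero for block-diagonal $P$. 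The terms you describe as ``combining via the Lyapunov identity for $H$ into a negative contribution on the $(1,1)$ block'' in fact land in the \emph{off-diagonal} blocks (one gets $\rho H-k^{2}H(L_{B^{c}}+L_{B^{d}})$ there), so $\Gamma^{\tp}P+P\Gamma$ has a vanishing leading diagonal block and nonzero off-diagonal blocks and is never negative definite. The fix is precisely the paper's choice: a Lyapunov matrix with \emph{nonzero off-diagonal blocks}, namely $\left[\begin{smallmatrix}H&k^{-1}H\\ k^{-1}H&k^{-2}\delta H\end{smallmatrix}\right]$ applied directly to (\ref{eq03}); it is the $k^{-1}H$ off-diagonal blocks that pair $-L_{B^{c}}$ with $-kL_{B^{d}}$ so that the $(1,1)$ block of $\widetilde{H}$ in (\ref{Eq26}) becomes $(L_{B^{c}}+L_{B^{d}})^{\tp}H+H(L_{B^{c}}+L_{B^{d}})=I$. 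Once that $P$ is used, your Schur-complement bookkeeping with $k>\zeta$ goes through exactly as you describe.

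A secondary inaccuracy: in your sketch of why $\Xi=I-(L_{B^{c+}+B^{d+}}+\Delta_{|B^{c-}+B^{d-}|})^{-1}(B^{c-}+B^{d-})$ is positive stable, you assert that $I+(\text{nonnegative})$ has all eigenvalues with real part at least $1$. That is false in general (a nonnegative matrix can have eigenvalues with negative real part). The paper instead invokes the structural-unbalance argument of \cite[Lemma 4.2 and Corollary 4.2]{dm:18} for $\Xi$, and then combines $\det(L_{B^{c}}+L_{B^{d}})>0$ with the Ger\v{s}hgorin theorem to conclude positive stability of $L_{B^{c}}+L_{B^{d}}$. Since Lemma \ref{lem5} is cited rather than reproved in Theorem \ref{thm2}, this does not affect the theorem's proof, but the reasoning as you stated it would not stand on its own.
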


\begin{proof}
This theorem can be developed by resorting to the Lypunov-based analysis method in a similar way as Theorem \ref{thm1}, of which the proof details are given in the Appendix B.
\end{proof}

\begin{rem}\label{rem6}
In Theorem \ref{thm2}, we see that the sign-inconsistency of nonidentical topologies enables signed networks always to be neutralized. The neutralization analysis of Theorem \ref{thm2} also implies that the sign-inconsistency of nonidentical topologies may cause difficulties in implementing the behavior analysis of signed networks. It is mainly because for any union signed digraph $\mathscr{G}\left(B^{c}\right)\cup\mathscr{G}\left(B^{d}\right)$, the sign-inconsistency of $\mathscr{G}\left(B^{c}\right)$ and $\mathscr{G}\left(B^{d}\right)$ generally yields $\mathscr{G}\left(B^{c}\right)\cup\mathscr{G}\left(B^{d}\right)\neq\mathscr{G}\left(kB^{c}+\delta B^{d}\right)$, $\forall k>0$, $\forall\delta>0$. In fact, $\mathscr{G}\left(kB^{c}+\delta B^{d}\right)$ is generally only a subgraph of $\mathscr{G}\left(B^{c}\right)\cup\mathscr{G}\left(B^{d}\right)$. As a consequence, the stability property (or eigenvalue distribution) of $kL_{B^{c}}+\delta L_{B^{d}}$, $\forall k>0$, $\forall\delta>0$ can not be easily established in spite of the connectivity property of $\mathscr{G}\left(B^{c}\right)\cup\mathscr{G}\left(B^{d}\right)$. Fortunately, we can address this issue by an $M$-matrix-based analysis approach, which however has not been discussed in the literature of signed networks with nonidentical topologies (see, e.g., \cite{lm:18,mndz:18,nmd:16}).
\end{rem}

With Theorems \ref{thm1} and \ref{thm2}, we successfully obtain convergence results and analysis approaches for signed networks subject to nonidentical topologies, regardless of any sign pattern between signed digraphs representing the communication topologies of agents. We simultaneously explore a Lyapunov-based analysis approach to exploiting dynamic behaviors for signed networks in spite of the general directed topologies of agents, for which we also leverage an $M$-matrix approach.

\section{Conclusions}\label{sec4}

In this paper, convergence behaviors have been discussed for directed signed networks, on which the effects of nonidentical topologies have been investigated. A class of sign-consistency properties for any pairs of signed digraphs has been developed. It has been disclosed that the convergence behaviors of signed networks in presence of nonidentical topologies are associated closely with the sign-consistency property. More specifically, if the two signed digraphs describing the nonidentical topologies of signed networks are sign-consistent, then the states of all the agents polarize if and only if the union of two signed digraphs is structurally balanced; and neutralization emerges, otherwise. However, for signed networks with sign-inconsistent nonidentical topologies, they always become neutralized. Furthermore, a Lyapunov approach together with an $M$-matrix approach has been established for the behavior analysis of signed networks, which may be of independent interest in handling cooperative-antagonistic interactions over directed nonidentical topologies.

\section*{Acknowledgement}

The authors would like to thank the editors and reviewers for their suggestions and comments that help improve our paper.

\bibliographystyle{ieeetr}

\section*{Appendix A: Proof of Theorem \ref{thm1}}

\begin{proof}[Proof of Theorem \ref{thm1}]
Since $\mathscr{G}\left(B^{c}\right)$ and $\mathscr{G}\left(B^{d}\right)$ are sign-consistent, we use Lemma \ref{lem2} to denote the union $\mathscr{G}\left(B\right)$ of them with $B=B^{c}+B^{d}$. Thus, the necessity results of this theorem follow directly by the mutually exclusive relationship between the structural balance and unbalance of $\mathscr{G}\left(B\right)$. To establish the sufficiency results, we also resort to the structural balance and unbalance of $\mathscr{G}\left(B\right)$, and consider two cases separately.

{\it Case i): $\mathscr{G}\left(B\right)$ is structurally balanced.} We have $DL_{B}D=L_{\left|B\right|}$ (or equivalently, $DBD=\left|B\right|$) for some $D\in\mathcal{D}_{n}$. Based on the sign-consistency of $\mathscr{G}\left(B^{c}\right)$ and $\mathscr{G}\left(B^{d}\right)$, we can deduce $DL_{B^{c}}D=L_{\left|B^{c}\right|}$ and $DL_{B^{d}}D=L_{\left|B^{d}\right|}$. Denote $Q=\left[1_{n}~F\right]$, and then $Q$ is invertible such that
\[
Q^{-1}=\begin{bmatrix}C\\E\end{bmatrix}~\hbox{with}~
C=\begin{bmatrix}1&0&\cdots&0\end{bmatrix}\in\mathbb{R}^{1\times n}.
\]

\noindent Let $\widetilde{x}(t)=Q^{-1}Dx(t)$ and $\widetilde{y}(t)=Q^{-1}Dy(t)$, with which we denote $\widetilde{x}(t)=\left[\widetilde{x}_{1}(t),\widetilde{x}_{2}^{\tp}(t)\right]^{\tp}$ and $\widetilde{y}(t)=\left[\widetilde{y}_{1}(t),\widetilde{y}_{2}^{\tp}(t)\right]^{\tp}$ for $\widetilde{x}_{1}(t)\in\mathbb{R}$, $\widetilde{x}_{2}(t)\in\mathbb{R}^{n-1}$, $\widetilde{y}_{1}(t)\in\mathbb{R}$ and $\widetilde{y}_{2}(t)\in\mathbb{R}^{n-1}$. From (\ref{eq03}) and with $L_{\left|B^{c}\right|}1_{n}=0$ and $L_{\left|B^{d}\right|}1_{n}=0$, we can validate
\begin{equation}\label{Eq34}
\aligned
\begin{bmatrix}
\dot{\widetilde{x}}(t)\\
\dot{\widetilde{y}}(t)
\end{bmatrix}
&=\begin{bmatrix}
-Q^{-1}DL_{B^{c}}DQ&I\\
-kQ^{-1}DL_{B^{d}}DQ&-kI
\end{bmatrix}
\begin{bmatrix}
\widetilde{x}(t)\\
\widetilde{y}(t)
\end{bmatrix}\\
&=\begin{bmatrix}
-Q^{-1}L_{\left|B^{c}\right|}Q&I\\
-kQ^{-1}L_{\left|B^{d}\right|}Q&-kI
\end{bmatrix}
\begin{bmatrix}
\widetilde{x}(t)\\
\widetilde{y}(t)
\end{bmatrix}\\
&=\begin{bmatrix}
-\begin{bmatrix}0&CL_{\left|B^{c}\right|}F\\0&EL_{\left|B^{c}\right|}F\end{bmatrix}
&I\\
-k\begin{bmatrix}0&CL_{\left|B^{d}\right|}F\\0&EL_{\left|B^{d}\right|}F\end{bmatrix}
&-kI
\end{bmatrix}
\begin{bmatrix}
\widetilde{x}(t)\\
\widetilde{y}(t)
\end{bmatrix}
\endaligned
\end{equation}

\noindent We can further explore (\ref{Eq34}) to obtain two subsystems as
\begin{equation}\label{Eq35}
\aligned
\begin{bmatrix}
\dot{\widetilde{x}}_{1}(t)\\
\dot{\widetilde{y}}_{1}(t)
\end{bmatrix}
&=\begin{bmatrix}
0&1\\
0&-k
\end{bmatrix}
\begin{bmatrix}
\widetilde{x}_{1}(t)\\
\widetilde{y}_{1}(t)
\end{bmatrix}
+\begin{bmatrix}
-CL_{\left|B^{c}\right|}F&0\\
-kCL_{\left|B^{d}\right|}F&0
\end{bmatrix}
\begin{bmatrix}
\widetilde{x}_{2}(t)\\
\widetilde{y}_{2}(t)
\end{bmatrix}
\endaligned
\end{equation}

\noindent and
\begin{equation}\label{Eq36}
\aligned
\begin{bmatrix}
\dot{\widetilde{x}}_{2}(t)\\
\dot{\widetilde{y}}_{2}(t)
\end{bmatrix}
&=\begin{bmatrix}
-EL_{\left|B^{c}\right|}F&I\\
-kEL_{\left|B^{d}\right|}F&-kI
\end{bmatrix}
\begin{bmatrix}
\widetilde{x}_{2}(t)\\
\widetilde{y}_{2}(t)
\end{bmatrix}.
\endaligned
\end{equation}

\noindent Clearly, the system performance of (\ref{Eq35}) relies on that of (\ref{Eq36}).

Next, we develop the convergence of the system (\ref{Eq36}). Since $\mathscr{G}(B)$ is strongly connected and structurally balanced, we use Lemma \ref{lem3} to give a Lyapunov function candidate for (\ref{Eq36}) as
\begin{equation}\label{Eq37}
V_{1}(t)=\begin{bmatrix}
\widetilde{x}_{2}(t)\\
\widetilde{y}_{2}(t)
\end{bmatrix}^{\tp}\begin{bmatrix}W&k^{-1}W\\k^{-1}W&k^{-2}\delta W\end{bmatrix}\begin{bmatrix}
\widetilde{x}_{2}(t)\\
\widetilde{y}_{2}(t)
\end{bmatrix}.
\end{equation}

\noindent Due to $k>0$ and $\delta>1$, $V_{1}(t)$ is positive definite based on the positive definiteness of $W$. Moreover, when we consider (\ref{Eq37}) for (\ref{Eq36}), we employ $L_{\left|B^{c}\right|}=DL_{B^{c}}D$ and $L_{\left|B^{d}\right|}=DL_{B^{d}}D$, utilize (\ref{Eq8}), and can arrive at a symmetric matrix given by
\begin{equation*}\label{}
\aligned
\widetilde{W}
=\begin{bmatrix}I&
k^{-1}\left[E\left(L_{\left|B^{c}\right|}+\delta L_{\left|B^{d}\right|}\right)F\right]^{\tp}W\\
(\star)&2k^{-1}(\delta-1)W\end{bmatrix}
\endaligned
\end{equation*}

\noindent where $(\star)$ represents the term induced by symmetry, such that
\begin{equation}\label{Eq38}
\dot{V}_{1}(t)
=-\begin{bmatrix}
\widetilde{x}_{2}(t)\\
\widetilde{y}_{2}(t)
\end{bmatrix}^{\tp}
\widetilde{W}
\begin{bmatrix}
\widetilde{x}_{2}(t)\\
\widetilde{y}_{2}(t)
\end{bmatrix}.
\end{equation}

\noindent With the Schur complement lemma (see, e.g., \cite[Theorem 7.7.6]{hj:85}), we can validate for (\ref{Eq38}) that the positive definiteness of $\widetilde{W}$ is equivalent to that of $\widehat{W}$, where
\begin{equation}\label{Eq39}
\aligned
\widehat{W}
&=\left(k^{-1}W\right)\Big\{2(\delta-1)kW^{-1}
-\left[E\left(L_{\left|B^{c}\right|}+\delta L_{\left|B^{d}\right|}\right)F\right]\\
&~~~~\times\left[E\left(L_{\left|B^{c}\right|}+\delta L_{\left|B^{d}\right|}\right)F\right]^{\tp}\Big\}
\left(k^{-1}W\right).
\endaligned
\end{equation}

\noindent Since both $\left[E\left(L_{\left|B^{c}\right|}+\delta L_{\left|B^{d}\right|}\right)F\right]
\left[E\left(L_{\left|B^{c}\right|}+\delta L_{\left|B^{d}\right|}\right)F\right]^{\tp}$ and $W^{-1}$ are positive definite from Lemma \ref{lem3} and Corollary \ref{cor1}, they are simultaneously diagonalizable based on \cite[Theorem 7.6.4]{hj:85}. By noting this fact and $k>\mu$, we can derive from (\ref{Eq39}) that $\widehat{W}$ is positive definite. Equivalently, $\widetilde{W}$ is positive definite. Thus, the use of (\ref{Eq38}) leads to the negative definiteness of $\dot{V}_{1}(t)$. From the standard Lyapunov stability theory, the system (\ref{Eq36}) is exponentially (or asymptotically) stable. Moreover, we employ the exponential stability of (\ref{Eq36}) and can use (\ref{Eq35}) to deduce
\begin{equation}\label{Eq40}
\aligned
\begin{bmatrix}
\dot{\widetilde{x}}_{1}(t)\\
\dot{\widetilde{y}}_{1}(t)
\end{bmatrix}
&=\begin{bmatrix}
\left(\aligned
&\widetilde{x}_{1}(0)-k^{-1}e^{-kt}\widetilde{y}_{1}(0)+\int_{0}^{t}\Big[-CL_{\left|B^{c}\right|}F\\
&+CL_{\left|B^{d}\right|}F e^{-k(t-\tau)}\Big]\widetilde{x}_{2}(\tau)d\tau
\endaligned\right)\\
e^{-kt}\widetilde{y}_{1}(0)
-kCL_{\left|B^{d}\right|}F
{\Ds\int_{0}^{t}}e^{-k(t-\tau)}\widetilde{x}_{2}(\tau)d\tau
\end{bmatrix}\\
&\to\begin{bmatrix}\widetilde{x}_{1\ast}\\0\end{bmatrix}
~\hbox{exponentially fast as}~t\to\infty
\endaligned
\end{equation}

\noindent where $\widetilde{x}_{1\ast}$ is a finite scalar given by
\[
\widetilde{x}_{1\ast}
=\widetilde{x}_{1}(0)
-CL_{\left|B^{c}\right|}F\int_{0}^{\infty}\widetilde{x}_{2}(\tau)d\tau.
\]

\noindent From the convergence result in (\ref{Eq40}) and the exponential stability of the system (\ref{Eq36}), we can directly derive the convergence of the state of the system (\ref{Eq34}). This also ensures the convergence of $X(t)$ for (\ref{eq03}) and $Y(t)$ for (\ref{eq04})
owing to the nonsingular linear transformation relationships as
\[
X(t)=\begin{bmatrix}DQ&0\\0&DQ\end{bmatrix}\begin{bmatrix}\widetilde{x}(t)\\\widetilde{y}(t)\end{bmatrix},
Y(t)=\Theta\begin{bmatrix}DQ&0\\0&DQ\end{bmatrix}\begin{bmatrix}\widetilde{x}(t)\\\widetilde{y}(t)\end{bmatrix}.
\]

To proceed, we calculate the converged value of $X(t)$. From two subsystems (\ref{Eq35}) and (\ref{Eq36}) separated from the system (\ref{Eq34}), it follows that there exists exactly one zero eigenvalue for the state matrix of (\ref{Eq34}), and the other eigenvalues have the positive real parts. With the algebraic equivalence between (\ref{eq04}) and (\ref{Eq34}), the same eigenvalue distribution applies to the state matrix $\Gamma$ of (\ref{eq04}). In the same way as \cite[eq. (15)]{mdj:16}, we can deduce
\begin{equation}\label{Eq18}
\lim_{t\to\infty}e^{\Gamma t}=w_{r}w_{l}^{\tp}\in\mathbb{R}^{2n\times2n}
\end{equation}

\noindent where $w_{l}\in\mathbb{R}^{2n}$ and $w_{r}\in\mathbb{R}^{2n}$ are eigenvectors for the zero eigenvalue of $\Gamma$ such that $\Gamma w_{r}=0$, $w_{l}^{\tp}\Gamma=0$ and $w_{l}^{\tp}w_{r}=1$. By the strong connectivity and structural balance of $\mathscr{G}\left(B\right)$ and the structure of $\Gamma$, we have the candidates of $w_{l}$ and $w_{r}$ as
\begin{equation}\label{Eq19}
w_{l}=\begin{bmatrix}D\nu\\0\end{bmatrix},
~~~w_{r}=\begin{bmatrix}D1_{n}\\k^{-1}D1_{n}\end{bmatrix}.
\end{equation}

\noindent The substitution of (\ref{Eq18}) and (\ref{Eq19}) into (\ref{eq04}) yields
\begin{equation*}
\lim_{t\to\infty}Y(t)
=\begin{bmatrix}D1_{n}\\k^{-1}D1_{n}\end{bmatrix}\begin{bmatrix}D\nu\\0\end{bmatrix}^{\tp}Y(0)
\end{equation*}

\noindent which, together with $X(t)=\Theta^{-1}Y(t)$, leads to
\begin{equation}\label{Eq20}
\aligned
\lim_{t\to\infty}X(t)
&=\begin{bmatrix}0&I\\I&-kI\end{bmatrix}\lim_{t\to\infty}Y(t)\\
&=\begin{bmatrix}\left\{\nu^{\tp}D\left[x(0)+k^{-1}y(0)\right]\right\}D1_{n}\\0\end{bmatrix}.
\endaligned
\end{equation}

\noindent The converged value of (\ref{eq03}) in 1) of Theorem \ref{thm1} holds by (\ref{Eq20}).

{\it Case ii): $\mathscr{G}\left(B\right)$ is structurally unbalanced.} In this case, we apply Lemma \ref{lem3} to define a Lyapunov function candidate as
\begin{equation}\label{Eq23}
V_{2}(t)=X^{\tp}(t)\begin{bmatrix}H&k^{-1}H\\k^{-1}H&k^{-2}\delta H\end{bmatrix}X(t).
\end{equation}

\noindent We can deduce from (\ref{Eq23}) that $V_{2}(t)$ is positive definite because $k>\mu>0$, $\delta>1$ and $H$ is positive definite. Furthermore, by considering (\ref{Eq23}) for (\ref{eq03}), we can validate
\begin{equation}\label{Eq25}
\aligned
\dot{V}_{2}(t)
=-X^{\tp}(t)\widetilde{H}X(t)
\endaligned
\end{equation}

\noindent where, due to (\ref{Eq22}), $\widetilde{H}$ is given by
\begin{equation}\label{Eq26}
\widetilde{H}
=\begin{bmatrix}
I&
k^{-1}\left(L_{B^{c}}+\delta L_{B^{d}}\right)^{\tp}H\\
(\star)&2k^{-1}(\delta-1)H
\end{bmatrix}.
\end{equation}

\noindent In the same way as used in proving the positive definiteness of $\widetilde{W}$ in (\ref{Eq38}), we can verify from (\ref{Eq26}) that $\widetilde{H}$ is positive definite, and as a consequence of (\ref{Eq25}), $\dot{V}_{2}(t)$ is negative definite. Based on the standard Lyapunov stability theory, we can deduce that the system (\ref{eq03}) is exponentially stable such that $\lim_{t\to\infty}x(t)=0$ and $\lim_{t\to\infty}y(t)=0$.

With the analyses in Cases i) and ii), we gain the sufficiency proofs for the results 1) and 2) of Theorem \ref{thm1}, respectively.
\end{proof}

\section*{Appendix B: Proof of Theorem \ref{thm2}}

To prove Theorem \ref{thm2}, we need to first present the proofs of Lemmas \ref{lem6} and \ref{lem5}.

\begin{proof}[Proof of Lemma \ref{lem6}]
Because $\mathscr{G}\left(B^{c}\right)$ and $\mathscr{G}\left(B^{d}\right)$ are sign-inconsistent, there exist some pairs of $\{i,j\}$ such that $b_{ij}^{c}b_{ij}^{d}<0$. It thus yields $b_{ij}^{c-}+b_{ij}^{d-}<0$. This ensures $B^{c-}+B^{d-}\neq0$, based on which we can construct a nonnegative matrix as
\begin{equation}\label{Eq29}
\overline{A}=
\begin{bmatrix}
0&0\\
\Delta_{\left|B^{c-}+B^{d-}\right|}1_{n}
&B^{c+}+B^{d+}
\end{bmatrix}\geq0.
\end{equation}

\noindent Denote $\overline{A}\triangleq\left[\overline{a}_{ij}\right]\in\mathbb{R}^{(n+1)\times(n+1)}$. We can define an unsigned digraph $\overline{\mathscr{G}}\left(\overline{A}\right)=\left(\overline{\mathcal{V}},\overline{\mathcal{E}},\overline{A}\right)$, in which we set $\overline{\mathcal{V}}=\left\{v_{0}\right\}\cup\mathcal{V}\triangleq\left\{v_{i}:0\leq i\leq n\right\}$, and then let $\overline{\mathcal{E}}\subseteq\left\{\left(v_{j},v_{i}\right):\forall v_{i},v_{j}\in\overline{\mathcal{V}}\right\}$ be such that  $\left(v_{j},v_{i}\right)\in\overline{\mathcal{E}}\Leftrightarrow\overline{a}_{i+1,j+1}>0$ and $\left(v_{j},v_{i}\right)\not\in\overline{\mathcal{E}}\Leftrightarrow\overline{a}_{i+1,j+1}=0$. From (\ref{Eq29}), we can derive that $\overline{\mathscr{G}}\left(\overline{A}\right)$ has a path from $v_{0}$ to $v_{i}$, $\forall i\in\mathcal{I}_{n}$ if and only if $\sum_{j=1}^{n}\left(\left|b_{ij}^{c-}\right|+\left|b_{ij}^{d-}\right|\right)>0$. With this property, we first prove that the unsigned digraph $\overline{\mathscr{G}}\left(\overline{A}\right)$ contains a spanning tree, and then that $L_{B^{c+}+B^{d+}}+\Delta_{\left|B^{c-}+B^{d-}\right|}$ is an $M$-matrix to complete this proof.

We know from (\ref{Eq29}) that $\overline{\mathscr{G}}\left(\overline{A}\right)$ is composed of the unsigned digraph $\mathscr{G}\left(B^{c+}+B^{d+}\right)$ and the vertex $v_{0}$ with related arcs of $\left\{\left(v_{0},v_{i}\right):\sum_{j=1}^{n}\left(\left|b_{ij}^{c-}\right|+\left|b_{ij}^{d-}\right|\right)>0,i\in\mathcal{I}_{n}\right\}$. By the sign-inconsistency of $\mathscr{G}\left(B^{c}\right)$ and $\mathscr{G}\left(B^{d}\right)$, we consider $b_{l_{0}j_{0}}^{c}b_{l_{0}j_{0}}^{d}<0$ for some $l_{0}\in\mathcal{I}_{n}$ and $j_{0}\in\mathcal{I}_{n}$, with which it hence leads to $b_{l_{0}j_{0}}^{c-}+b_{l_{0}j_{0}}^{d-}<0$, and consequently,
\begin{equation*}
\aligned
\sum_{j=1}^{n}\left(\left|b_{l_{0}j}^{c-}\right|+\left|b_{l_{0}j}^{d-}\right|\right)
&\geq\left|b_{l_{0}j_{0}}^{c-}\right|+\left|b_{l_{0}j_{0}}^{d-}\right|\\
&=-\left(b_{l_{0}j_{0}}^{c-}+b_{l_{0}j_{0}}^{d-}\right)>0.
\endaligned
\end{equation*}

\noindent Namely, there exists a directed path from $v_{0}$ to $v_{l_{0}}$ in $\overline{\mathscr{G}}\left(\overline{A}\right)$.

From the strong connectivity of $\mathscr{G}\left(B^{c}\right)\cup\mathscr{G}\left(B^{d}\right)$, it follows that for any $v_{i}$, $i\in\mathcal{I}_{n}$, this union has a path from $v_{l_{0}}$ to $v_{i}$, i.e., it has sequential arcs $\left(v_{l_{0}},v_{l_{1}}\right)$, $\left(v_{l_{1}},v_{l_{2}}\right)$, $\cdots$, $\left(v_{l_{m-1}},v_{l_{m}}\right)$ for distinct agents $v_{l_{0}}$, $v_{l_{1}}$, $\cdots$, $v_{l_{m}}$ (with $l_{m}=i$). Further, either $b_{l_{j}l_{j-1}}^{c}\neq0$ or $b_{l_{j}l_{j-1}}^{d}\neq0$, $\forall1\leq j\leq m$ holds. Equivalently, we can obtain either $b_{l_{j}l_{j-1}}^{c-}+b_{l_{j}l_{j-1}}^{d-}<0$ or  $b_{l_{j}l_{j-1}}^{c+}+b_{l_{j}l_{j-1}}^{d+}>0$, $\forall1\leq j\leq m$. If $b_{l_{m}l_{m-1}}^{c-}+b_{l_{m}l_{m-1}}^{d-}<0$, i.e., $b_{il_{m-1}}^{c-}+b_{il_{m-1}}^{d-}<0$, then we have
\begin{equation*}
\aligned
\sum_{j=1}^{n}\left(\left|b_{ij}^{c-}\right|+\left|b_{ij}^{d-}\right|\right)
&\geq\left|b_{il_{m-1}}^{c-}\right|+\left|b_{il_{m-1}}^{d-}\right|\\
&=-\left(b_{il_{m-1}}^{c-}+b_{il_{m-1}}^{d-}\right)>0
\endaligned
\end{equation*}

\noindent which means that a directed path from $v_{0}$ to $v_{i}$ exists in $\overline{\mathscr{G}}\left(\overline{A}\right)$. Otherwise, if $b_{l_{m}l_{m-1}}^{c-}+b_{l_{m}l_{m-1}}^{d-}<0$ does not hold, we know $b_{l_{m}l_{m-1}}^{c+}+b_{l_{m}l_{m-1}}^{d+}>0$. By this fact, let $\widetilde{m}$ ($1\leq\widetilde{m}\leq m$) be the greatest integer that satisfies $b_{l_{j}l_{j-1}}^{c+}+b_{l_{j}l_{j-1}}^{d+}>0$, $\forall\widetilde{m}\leq j\leq m$ and $b_{l_{\widetilde{m}-1}l_{\widetilde{m}-2}}^{c-}+b_{l_{\widetilde{m}-1}l_{\widetilde{m}-2}}^{d-}<0$, where in particular if $\widetilde{m}=1$, we denote $l_{\widetilde{m}-2}$ as the integer $j_{0}$ fulfilling $b_{l_{0}j_{0}}^{c-}+b_{l_{0}j_{0}}^{d-}<0$. Thus, the use of $b_{l_{\widetilde{m}-1}l_{\widetilde{m}-2}}^{c-}+b_{l_{\widetilde{m}-1}l_{\widetilde{m}-2}}^{d-}<0$ leads to
\begin{equation*}
\aligned
\sum_{j=1}^{n}\left(\left|b_{l_{\widetilde{m}-1}j}^{c-}\right|+\left|b_{l_{\widetilde{m}-1}j}^{d-}\right|\right)
&\geq\left|b_{l_{\widetilde{m}-1}l_{\widetilde{m}-2}}^{c-}\right|+\left|b_{l_{\widetilde{m}-1}l_{\widetilde{m}-2}}^{d-}\right|\\
&=-\left(b_{l_{\widetilde{m}-1}l_{\widetilde{m}-2}}^{c-}+b_{l_{\widetilde{m}-1}l_{\widetilde{m}-2}}^{d-}\right)\\
&>0
\endaligned
\end{equation*}

\noindent i.e., $\overline{\mathscr{G}}\left(\overline{A}\right)$ has a directed path from $v_{0}$ to $v_{l_{\widetilde{m}-1}}$. By resorting to $b_{l_{j}l_{j-1}}^{c+}+b_{l_{j}l_{j-1}}^{d+}>0$, $\forall\widetilde{m}\leq j\leq m$, we can obtain that $\left(v_{l_{j-1}},v_{l_{j}}\right)$, $\forall\widetilde{m}\leq j\leq m$ is an arc in $\mathscr{G}\left(B^{c+}+B^{d+}\right)$, and consequently,  $\left(v_{l_{j-1}},v_{l_{j}}\right)\in\overline{\mathcal{E}}$, $\forall\widetilde{m}\leq j\leq m$ holds. Based on these two facts, we can conclude that a path from $v_{0}$ to $v_{i}$ in $\overline{\mathscr{G}}\left(\overline{A}\right)$ is represented by the sequential arcs $\left(v_{0},v_{l_{\widetilde{m}-1}}\right)$, $\left(v_{l_{\widetilde{m}-1}},v_{l_{\widetilde{m}}}\right)$, $\cdots$, $\left(v_{l_{m-1}},v_{l_{m}}\right)$.

Summarising, we can deduce that $\overline{\mathscr{G}}\left(\overline{A}\right)$ admits paths from $v_{0}$ to every other vertex $v_{i}$, $\forall i\in\mathcal{I}_{n}$. In other words, $\overline{\mathscr{G}}\left(\overline{A}\right)$ has a spanning tree.

From (\ref{Eq29}), the Laplacian matrix of $\overline{\mathscr{G}}\left(\overline{A}\right)$ is given by
\begin{equation}\label{Eq30}
L_{\overline{A}}=
\begin{bmatrix}
0&0\\
-\Delta_{\left|B^{c-}+B^{d-}\right|}1_{n}
&L_{B^{c+}+B^{d+}}+\Delta_{\left|B^{c-}+B^{d-}\right|}
\end{bmatrix}.
\end{equation}

\noindent Since $\overline{\mathscr{G}}\left(\overline{A}\right)$ has a spanning tree, we can develop that for $L_{\overline{A}}$, there exists only one zero eigenvalue and the other eigenvalues of it are all with positive real parts (see, e.g., \cite[Lemma 3.3]{rb:05}). This together with (\ref{Eq30}) yields that $L_{B^{c+}+B^{d+}}+\Delta_{\left|B^{c-}+B^{d-}\right|}$ is positive stable. Due to also $L_{B^{c+}+B^{d+}}+\Delta_{\left|B^{c-}+B^{d-}\right|}\in\mathcal{Z}_{n}$ based on (\ref{Eq43}), it is immediately an $M$-matrix according to \cite[Definition 2.5.2]{hj:91}.
\end{proof}

\begin{proof}[Proof of Lemma \ref{lem5}]
By Lemma \ref{lem6}, we know $L_{B^{c+}+B^{d+}}+\Delta_{\left|B^{c-}+B^{d-}\right|}$ is an $M$-matrix. As a consequence, we can derive $\left(L_{B^{c+}+B^{d+}}+\Delta_{\left|B^{c-}+B^{d-}\right|}\right)^{-1}\geq0$. Thus, if we denote
\[
\Xi=I-\left(L_{B^{c+}+B^{d+}}+\Delta_{\left|B^{c-}+B^{d-}\right|}\right)^{-1}
\left(B^{c-}+B^{d-}\right)
\]

\noindent then with (\ref{Eq28}), $\Xi\geq0$ holds according to \cite[eqs. (8.1.5) and (8.1.11)]{hj:85}. From the sign-inconsistency of $\mathscr{G}\left(B^{c}\right)$ and $\mathscr{G}\left(B^{d}\right)$, it follows that $B^{c-}+B^{d-}\neq0$, and that $\mathscr{G}\left(B^{c}\right)$ and $\mathscr{G}\left(B^{d}\right)$ can not be simultaneously structurally balanced (namely, there does not exist any $D\in\mathcal{D}_{n}$ to ensure both $DB^{c}D=\left|B^{c}\right|$ and $DB^{d}D=\left|B^{d}\right|$). Moreover, by following the same lines as the proofs of \cite[Lemma 4.2 and Corollary 4.2]{dm:18}, we can develop that $\Xi$ is positive stable.

By leveraging the strong connectivity of $\mathscr{G}\left(B^{c}\right)\cup\mathscr{G}\left(B^{d}\right)$, we can deduce that $L_{B^{c}}+L_{B^{d}}$ is diagonally dominant and its diagonal entries are all positive. Based on the Ger\v{s}hgorin circle theorem (see, e.g., \cite[Theorem 6.1.1]{hj:85}), we can validate that the eigenvalues of $L_{B^{c}}+L_{B^{d}}$ either have positive real parts or are zero. In addition, we can employ (\ref{Eq42}) to derive
\begin{equation}\label{Eq31}
\aligned
\det\left(L_{B^{c}}+L_{B^{d}}\right)
&=\det\left(L_{B^{c+}+B^{d+}}
+\Delta_{\left|B^{c-}+B^{d-}\right|}\right)\det\left(\Xi\right)\\
&>0
\endaligned
\end{equation}

\noindent where the positive stability of both $L_{B^{c+}+B^{d+}}+\Delta_{\left|B^{c-}+B^{d-}\right|}$ and $\Xi$ is used. By $\det\left(L_{B^{c}}+L_{B^{d}}\right)
=\prod_{i=1}^{n}\lambda_{i}\left(L_{B^{c}}+L_{B^{d}}\right)$ (see, e.g., \cite[Theorem 1.2.12]{hj:85}), we can employ (\ref{Eq31}) to obtain that $L_{B^{c}}+L_{B^{d}}$ is positive stable.
\end{proof}

Now, we can present the proof of Theorem \ref{thm2} below.

\begin{proof}[Proof of Theorem \ref{thm2}]
In view of Lemma \ref{lem5} and Corollary \ref{cor2}, this proof can be developed based on the Lyapunov function candidate $V_{2}(t)$ in (\ref{Eq23}) by following the same steps as employed in the Case ii) of the proof of Theorem \ref{thm1} and, thus, is not detailed here.
\end{proof}

\end{document}